\newcommand{\setword}[2]{%
  \phantomsection
  #1\def\@currentlabel{\unexpanded{#1}}\label{#2}%
}
\definecolor{uuuuuu}{rgb}{0.26666666666666666,0.26666666666666666,0.26666666666666666}
\definecolor{xdxdff}{rgb}{0.49019607843137253,0.49019607843137253,1.}
\definecolor{ffqqqq}{rgb}{1.,0.,0.}
\definecolor{ffqqqq}{rgb}{1.,0.,0.}
\definecolor{ffxfqq}{rgb}{1.,0.4980392156862745,0.}
\definecolor{uuuuuu}{rgb}{0.26666666666666666,0.26666666666666666,0.26666666666666666}
\definecolor{qqwuqq}{rgb}{0.,0.39215686274509803,0.}
\definecolor{zzttqq}{rgb}{0.6,0.2,0.}
\definecolor{xdxdff}{rgb}{0.49019607843137253,0.49019607843137253,1.}
\definecolor{qqqqff}{rgb}{0.,0.,1.}
\definecolor{cqcqcq}{rgb}{0.7529411764705882,0.7529411764705882,0.7529411764705882}
\definecolor{sqsqsq}{rgb}{0.12549019607843137,0.12549019607843137,0.12549019607843137}
\theoremstyle{plain}
\newtheorem{theorem}[subsection]{Theorem}
\newtheorem{theo}[subsection]{Theorem}
\newtheorem{lemma}[subsection]{Lemma}
\newtheorem{defi}[subsection]{Definition}
\newtheorem{prop}[subsection]{Proposition}
\theoremstyle{definition}
\newtheorem{cor}[subsection]{Corollary}
\newtheorem{exam}[subsection]{Example}
\newtheorem{remark}[subsection]{Remark}
\newtheorem{note}[subsection]{Note}
\newtheorem*{deli}{Delineation}
\newcommand{\uu}{\cup}% union
\newcommand{\ii}{\cap}% intersection
\newcommand{\UU}{\bigcup}% big union
\newcommand{\ci}{\subseteq}% contained in with equality
\newcommand{\sci}{\subset}% strictly contained in
\newcommand{\set}[1]{\{#1\}}% set
\newcommand{\ga}{\alpha}
\newcommand{\gb}{\beta}
\renewcommand{\gg}{\gamma}% old use >>
\newcommand{\gk}{\kappa}
\newcommand{\gq}{\theta}
\newcommand{\tit}{\textit}% text italic
\newcommand{\D}[1]{\mathbb{#1}}% Doubled - blackboard bold - only caps, uas as \D{A}
\newcommand{\te}{\text}% same as \mathrm command.
\newcommand{\ol}{\overline}
\newcommand{\ul}{\underline}
\newcommand{\tri}{\triangle}
\begin{document}
\title{Conditional constrained and unconstrained quantization for probability distributions}

\author{$^1$Megha Pandey}
 \author{$^2$Mrinal Kanti Roychowdhury}

\address{$^{1}$Department of Mathematical Sciences \\
Indian Institute of Technology (Banaras Hindu University)\\
Varanasi, 221005, India.}
\address{$^{2}$School of Mathematical and Statistical Sciences\\
University of Texas Rio Grande Valley\\
1201 West University Drive\\
Edinburg, TX 78539-2999, USA.}

\email{$^1$meghapandey1071996@gmail.com, $^2$mrinal.roychowdhury@utrgv.edu}

\subjclass[2010]{60E05, 94A34.}
\keywords{Probability measure, conditional constrained and unconstrained quantization, optimal sets of $n$-points,  conditional quantization dimension, conditional quantization coefficient}

\date{}
\maketitle
\pagestyle{myheadings}\markboth{M. Pandey and M.K. Roychowdhury}{Conditional constrained and unconstrained quantization for probability distributions}
\begin{abstract}
In this paper, we introduce and develop the concept of \emph{conditional quantization} for Borel probability measures on \( \mathbb{R}^k \), considering both \emph{constrained} and \emph{unconstrained} frameworks. For each setting, we define the associated quantization errors, dimensions, and coefficients, and provide explicit computations for specific classes of probability distributions. A key result in the unconstrained case is that the union of all optimal sets of \( n \)-means is dense in the support of the measure. Furthermore, we demonstrate that in conditional constrained quantization, if the conditional set is contained within the union of the constraint family, then the lower and upper quantization dimensions, as well as the corresponding coefficients, remain unaffected by the conditional set for any Borel probability measure. In contrast, if the conditional set is not contained within this union, these properties may no longer hold, as illustrated through various examples.
\end{abstract}

\section{Introduction}
Quantization for a Borel probability measure refers to the idea of estimating a given probability by a discrete probability supported on a finite set. 
A plethora of research is given on quantization for probability distributions without using any constraint. The concept of constrained quantization was recently introduced by Pandey and Roychowdhury (see \cite{PR1, PR2, PR3}). This new approach allows us to categorize quantization into two types: unconstrained quantization and constrained quantization. Unconstrained quantization is traditionally known as quantization. For some recent work in the direction of unconstrained quantization, one can see \cite{GL, DFG, DR, GL2, GL3, KNZ, PRRSS, P1, R1, R2, R3}. Quantization theory has broad applications in communications, information theory, signal processing, and data compression (see \cite{GG, GL1, GN, P, Z1, Z2}). This paper deals with conditional quantization in both constrained and unconstrained scenarios. Conditional quantization also has significant interdisciplinary applications: for example, in radiation therapy of cancer treatment to find the optimal locations of $n$ centers of radiation, where $k$ centers for some $k<n$ of radiation are preselected, the conditional quantization technique can be useful.

Let $P$ be a Borel probability measure on $\D R^k$ equipped with a metric $d$ induced by a norm $\|\cdot\|$ on $\D R^k$, and $r \in (0, \infty)$. Let $\D N:=\set{1, 2, 3, \cdots}$ be the set of natural numbers. For a finite set $\gg \sci \D R^k$ and $a\in \gg$, by $M(a|\gg)$ we denote the set of all elements in $\D R^k$ which are nearest to $a$ among all the elements in $\gg$, i.e.,
$M(a|\gg)=\set{x \in \D R^k : d(x, a)=\mathop{\min}\limits_{b \in \gg}d(x, b)}.$
$M(a|\gg)$ is called the \tit{Voronoi region} in $\D R^k$ generated by $a\in \gg$.
\begin{defi}\label{EqVr111}
Let $\set{S_j\ci \D R^k: j\in \D N}$ be a family of closed sets with $S_1$ nonempty. Let $\gb\sci \D R^k$ be given with $\te{card}(\gb)=\ell$ for some $\ell\in \D N$. 
Then, for $n\in \D N$ with $n\geq \ell$, the \tit {$n$th conditional constrained quantization
error} for $P$, of order $r$, with respect to the family of constraints $\set{S_j\ci \D R^k: j\in \D N}$ and the set $\gb$, is defined by
\begin{equation}\label{EqVr121} 
V_{n, r}:=V_{n, r}(P)=\inf_{\ga} \Big\{\int \mathop{\min}\limits_{a\in\ga\uu\gb} d(x, a)^r dP(x) : \ga \ci \UU_{j=1}^n S_j, ~ 0\leq  \text{card}(\ga) \leq n-\ell \Big\},
\end{equation} 
where $\te{card}(A)$ represents the cardinality of the set $A$. 
\end{defi}

Notice that for a given $r\in (0, \infty)$, the error $V_{n, r}$ explicitly depends on $\gb$. 
For any $\ga\ci \D R^k$, where $\ga$ is locally finite (i.e., intersection of $\ga$ with any bounded subset of $\D R^k$ is finite, in other words,  $\ga$ is countable and closed),
the number 
\begin{equation}
  V_r(P; \ga):= \int \mathop{\min}\limits_{a\in\ga} d(x, a)^r dP(x)
\end{equation}
is called the \tit{distortion error} for $P$, of order $r$, with respect to the set $\ga$. 
We assume that $\int d(x, 0)^r dP(x)<\infty$ to make sure that the infimum in \eqref{EqVr121} exists (see \cite{PR1}). The set $\gb$ that occurs in Definition~\ref{EqVr111} is called a \tit{conditional set}. 
\begin{defi}
A set $ \ga\uu\gb$, where $\ga \ci \mathop{\UU}_{j=1}^n S_j$ and $P(M(b|\ga\uu \gb))>0$ for $b\in \gb$, for which the infimum in  \eqref{EqVr121} exists and contains no less than $\ell$ elements, and no more than $n$ elements is called an \tit{optimal set of $n$-points} for $P$ or more specifically a \tit{conditional optimal set of $n$-points} for $P$ with respect to the family of constraints $\set{S_j\ci \D R^k: j\in \D N}$ and the conditional set $\gb$. Elements of an optimal set are called \tit{optimal elements}. 
\end{defi}

\begin{remark} \label{EqVr2}
Instead of the family of constraints $\set{S_j \ci \D R^k : j\in \D N}$ if there is a single constraint $S$, i.e.,  if $S_j=S$ for all $j\in \D N$, then Definition~\ref{EqVr111} reduces to 
\[
V_{n, r}:=V_{n, r}(P)=\inf_{\ga} \Big\{\int \mathop{\min}\limits_{a\in\ga\uu\gb} d(x, a)^r dP(x) : \ga \ci S, ~ 0\leq  \text{card}(\ga) \leq n-\ell \Big\},
\]
which is called the \tit {$n$th conditional constrained quantization
error} for $P$, of order $r$, with respect to the constraint $S$ and the conditional set $\gb$.
\end{remark}
 Let $V_{n, r}(P)$ be a strictly decreasing sequence, and write $V_{\infty, r}(P):=\mathop{\lim}\limits_{n\to \infty} V_{n, r}(P)$.   The numbers
\begin{equation} \label{eq55} \ul D_r(P):=\liminf_{n\to \infty}  \frac{r\log n}{-\log (V_{n, r}(P)-V_{\infty, r}(P))} \te{ and } \ol D_r(P):=\limsup_{n\to \infty} \frac{r\log n}{-\log (V_{n, r}(P)-V_{\infty, r}(P))}, \end{equation}
are called the \tit{conditional lower} and the \tit{conditional upper constrained quantization dimensions} of the probability measure $P$ of order $r$, respectively. If $\ul D_r (P)=\ol D_r (P)$, the common value is called the \tit{conditional constrained quantization dimension} of $P$ of order $r$ and is denoted by $D_r(P)$. The conditional constrained quantization dimension measures the speed how fast the specified measure of the conditional constrained quantization error converges as $n$ tends to infinity. A higher conditional constrained quantization dimension suggests a faster convergence of the $n$th conditional constrained quantization error.

 For any $\gk>0$, the two numbers $\liminf_n n^{\frac r \gk}  (V_{n, r}(P)-V_{\infty, r}(P))$ and $\limsup_n  n^{\frac r \gk}(V_{n, r}(P)-V_{\infty, r}(P))$ are, respectively, called the \tit{$\gk$-dimensional conditional lower} and \tit{conditional upper constrained quantization coefficients} for $P$ of order $r$. If both of them are equal, then it is called the \tit{$\gk$-dimensional conditional constrained quantization coefficient} for $P$ of order $r$.  

\begin{defi} 
In Definition~\ref{EqVr111} if $S_j=\D R^k$ for all $j\in \D N$, then  for $n\in \mathbb{N}$ with $n\geq \ell$, the \tit {$n$th conditional unconstrained quantization
error} for $P$, of order $r$, with respect to the conditional set $\gb$, is defined by 
\begin{equation}\label{EqVr11} 
V_{n, r}:=V_{n, r}(P)=\inf_{\ga} \Big\{\int \mathop{\min}\limits_{a\in\ga\uu\gb} d(x, a)^r dP(x) : \ga \ci \D R^k, ~ 0\leq  \text{card}(\ga) \leq n-\ell \Big\}.
\end{equation} 
The corresponding quantization dimension and the $\gk$-dimensional quantization coefficient, if they exist, are called the conditional unconstrained quantization dimension and the $\gk$-dimensional conditional unconstrained quantization coefficient for $P$, respectively. A set $\ga\uu \gb$ for which the infimum in \eqref{EqVr11} exists is called a conditional unconstrained optimal set of $n$-points for $P$. 
\end{defi} 

This paper deals with $r=2$ and $k=2$, and the metric on $\D R^2$ as the Euclidean metric induced by the Euclidean norm $\|\cdot\|$. Instead of writing $V_r(P; \ga)$ and $V_{n, r}:=V_{n, r}(P)$ we will write them as $V(P; \ga)$ and $V_n:=V_{n}(P)$, i.e., $r$ is omitted in the subscript as $r=2$ throughout the paper.

\begin{deli}
In this paper,  first we have given the preliminaries. Then, in Section~\ref{secMe1}, we have investigated the conditional constrained quantization for a uniform distribution on the boundary of a semicircular disc; in Section~\ref{secMe4}, we have investigated the conditional unconstrained quantization for a uniform distribution on an equilateral triangle; in Section~\ref{secob}, with some examples, we have described some properties of conditional optimal sets of $n$-points. In the last section, Section~\ref{prop}, we have proved two theorems: first theorem shows that if $\ga_n$ are the optimal sets of $n$-means for $P$ for all $n\in \D N$, then $\mathop{\uu}\limits_{n=1}^\infty \ga_n$ is dense in the support of $P$; in the second theorem we have shown the facts that in conditional constrained quantization, the lower and upper quantization dimensions, and the lower and upper quantization coefficients for a Borel probability measure do not depend on the conditional set provided that the conditional set is contained in the union of the family of constraints. If the conditional set is not contained in the union of the family of constraints, the facts may not be true as illustrated through some examples.  
\end{deli}

\begin{note}
The conditional quantization in both constrained and unconstrained scenario is newly introduced in this paper. Although all the work done in the sequel deal with uniform distributions, interested researchers can explore them for any probability distribution. 
 \end{note}

\section{Preliminaries} \label{secPre}

Let $\mathbb{N}$ be the set of natural numbers and $\mathbb{R}$ be the collection of all real numbers.
For any two elements $(a, b)$ and $(c, d)$ in $\D R^2$, we write 
 \[\rho((a, b), (c, d)):=(a-c)^2 +(b-d)^2,\] which gives the squared Euclidean distance between the two elements $(a, b)$ and $(c, d)$.
  Two elements $p$ and $q$ in an optimal set of $n$-points are called \tit{adjacent elements} if they have a common boundary in their own Voronoi regions. Let $e$ be an element on the common boundary of the Voronoi regions of two adjacent elements $p$ and $q$ in an optimal set of $n$-points. Since the common boundary of the Voronoi regions of any two elements is the perpendicular bisector of the line segment joining the elements, we have
\[\rho(p, e)-\rho(q, e)=0. \]
We call such an equation a \tit{canonical equation}. 
Notice that any element $x\in \D R$ can be identified as an element $(x, 0)\in \D R^2$. Thus, the nonnegative real-valued function $\rho$ on $\D R \times \D R^2$ defined by 
\[\rho: \D R \times \D R^2 \to [0, \infty) \te{ such that } \rho(x, (a, b))=(x-a)^2 +b^2,\]
represents the squared Euclidean distance between an element $x\in \D R$ and an element $(a, b)\in \D R^2$. On the other hand, 
\[\rho: \D R \times \D R  \to [0, \infty) \te{ such that } \rho(x, y)=x^2+y^2,\]
represents the squared Euclidean distance between any two elements $x, y \in \D R$.

Let us now give the following three propositions. 
\begin{prop} \label{Me0} 
Let $P$ be a uniform distribution on the closed interval $[a, b]$. Let $a\leq c<d\leq b$. Let $\ga_n$ be an optimal set of $n$-points for $P$ such that $\ga_n$ contains $m$ elements from the closed interval $[c, d]$ including the endpoints $c$ and $d$, then
\[\ga_n\ii[c, d]=\set{c+\frac{j-1}{m-1}(d-c) : 1\leq j\leq m}.\] 
Then, the distortion error contributed by these $m$ elements in the closed interval  $[c, d]$ is given by 
 \[V(P, \ga_n\ii [c, d]):=\frac 1 {12}\frac{(d-c)^3}{b-a}\frac 1{ (m-1)^2}.\]
\end{prop}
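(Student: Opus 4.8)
The plan is to reduce everything to an elementary one-dimensional optimization. First I would argue that the $m$ points of $\ga_n$ lying in $[c,d]$ must be equally spaced. Since $c,d\in\ga_n\ii[c,d]$ are the two extreme points of this sub-collection, write the $m$ points as $c=t_1<t_2<\cdots<t_m=d$. For a uniform distribution on $[a,b]$, the distortion error contributed by a block of consecutive points depends only on the locations of the consecutive boundary points (perpendicular bisectors, here just midpoints of adjacent $t_j$'s), so the total contribution of these $m$ points is $\frac{1}{b-a}\sum_{j=1}^{m}\int_{I_j}(x-t_j)^2\,dx$ where $I_j$ is the Voronoi interval of $t_j$ relative to its neighbours in $\ga_n$. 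Because $t_1=c$ and $t_m=d$ are fixed, optimality forces each interior $t_j$ to be the midpoint of $(t_{j-1},t_{j+1})$ — this is exactly the canonical equation \eqref{eq010} / the centroid condition of Remark~\ref{rem1} applied inside $[c,d]$ — and the unique solution of that linear recursion with the prescribed endpoints is the arithmetic progression $t_j=c+\frac{j-1}{m-1}(d-c)$. This gives the first displayed formula.

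Next I would compute the distortion error with these equally spaced points. With common gap $h=\frac{d-c}{m-1}$, each interior point $t_j$ owns the interval $[t_j-\tfrac h2,\,t_j+\tfrac h2]$ of length $h$, contributing $\frac{1}{b-a}\int_{-h/2}^{h/2}u^2\,du=\frac{1}{b-a}\cdot\frac{h^3}{12}$; the two endpoint points $c$ and $d$ each own a half-interval of length $h/2$ (the outer side of $c$ and $d$ is claimed by the neighbouring points of $\ga_n$ outside $[c,d]$, or, if $c=a$ or $d=b$, there is simply no mass beyond), contributing $\frac{1}{b-a}\int_{0}^{h/2}u^2\,du=\frac{1}{b-a}\cdot\frac{h^3}{24}$ each. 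Summing, the total is $\frac{1}{b-a}\big[(m-2)\frac{h^3}{12}+2\cdot\frac{h^3}{24}\big]=\frac{1}{b-a}\cdot\frac{(m-1)h^3}{12}=\frac{1}{12}\frac{(d-c)^3}{b-a}\frac{1}{(m-1)^2}$, which is the second displayed formula.

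The one genuinely delicate point — and the step I expect to be the main obstacle — is justifying the boundary bookkeeping, i.e., that the outer halves of the Voronoi regions of $c$ and $d$ contribute nothing extra to $V(P,\ga_n\ii[c,d])$ and that no point of $\ga_n\setminus[c,d]$ intrudes into $[c,d]$. For this I would invoke that $\ga_n$ is a genuine optimal set on $[a,b]$: the points of $\ga_n$ are ordered along $[a,b]$, the Voronoi region of each is an interval, and the region of each of the $m$ selected points is contained in $[c,d]$ except possibly for the outward halves at $c$ and $d$; since $V(P,\ga_n\ii[c,d])$ is defined as the integral of $\min_{a\in\ga_n\ii[c,d]} d(x,a)^2$ restricted to the union of those Voronoi regions (equivalently, the portion of the global distortion error attributable to these points), the half-interval accounting above is exactly right. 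Once that is pinned down, the remainder is the routine calculus already sketched, so I would keep that part brief in the final write-up.
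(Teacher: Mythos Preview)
Your proposal is correct and follows essentially the same approach as the paper: order the $m$ points with endpoints $c$ and $d$ fixed, argue they must be equally spaced, then sum the half-interval contributions at $c,d$ with the full-interval contributions at the interior points to obtain $\frac{(m-1)h^3}{12(b-a)}$. The only difference is one of emphasis: the paper simply asserts equal spacing from the fact that $P$ is uniform on a segment and proceeds directly to the integral, whereas you justify it via the centroid/canonical-equation condition and spend a paragraph on the boundary bookkeeping; the paper treats that last issue tacitly, so your caution there is more than the argument actually needs.
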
 
  
  \begin{proof}
$P$ being a uniform distribution on the closed interval $[a, b]$, its density function is given by $f(x)=\frac {1}{b-a}$ if $x\in [a, b]$, and zero otherwise. Also, notice that 
the closed interval $[a, b]$ can be represented by 
\[[a, b]:=\set{t : a\leq t\leq b}.\]
Let ${c_1, c_2, c_3, \cdots, c_m}$ be the $m$ elements that $\ga_n$ contains from the closed interval $[c, d]$ such that $c=c_1<c_2<\cdots<c_m=d$. 
Since the closed interval $[c, d]$ is a line segment and $P$ is a uniform distribution, we have 
\[c_2-c_1=c_3-c_2=\cdots=c_{m}-c_{m-1}=\frac{c_m-c_1}{m-1}=\frac {d-c}{m-1}\]
implying 
\begin{align*}
c_2&=c_1+\frac {d-c}{m-1}=c+\frac {d-c}{m-1},\\
c_3&=c_2+\frac {d-c}{m-1}=c+\frac {2(d-c)}{m-1}, \\
c_4&=c_3+\frac {d-c}{m-1}=c+\frac {3(d-c)}{m-1},\\
&\te{and so on.}
\end{align*} 
Thus, we have $c_j= c+\frac{j-1}{m-1}(d-c)$ for $ 1\leq j\leq m$. The distortion error contributed by the $m$ elements in the closed interval  $[c, d]$  is given by
 \begin{align*}
V(P; \ga_n\ii [c, d])&=\int_{[c,  d]} \min_{x\in \ga_n\ii[c, d]} \rho(t, x)\, dP\\
&=\frac 1{b-a} \Big(2\int_{c_1}^{\frac  {c_1+c_2}2} \rho(t, c_1)\,dt+(m-2) \int_{\frac  {c_1+c_2}2}^{\frac  {c_2+c_3}2} \rho(t, c_2)\,dt\Big)\\
&=\frac 1 {12}\frac{(d-c)^3}{b-a}\frac 1{ (m-1)^2}.
\end{align*} 
Thus, the proof of the proposition is complete. 
  \end{proof}

 \begin{prop} \label{Me030} 
Let $P$ be a uniform distribution on the closed interval $[a, b]$. Let $\ga_n$ be an optimal set of $n$-points for $P$ such that $\ga_n$ contains $n$ elements from the closed interval $[a, b]$ including the endpoint $a$. Then,
\[\ga_n=\left\{a+\frac{2(j-1)(b-a)}{2n-1} : 1\leq j\leq n\right\},\] 
with the conditional unconstrained quantization error 
 \[V_n=\frac{(b-a)^2}{3 (2 n-1)^2}.\]
\end{prop} 
 \begin{proof}
As mentioned in Proposition~\ref{Me0}, here the probability density function is given by $f(x)=\frac {1}{b-a}$ if $x\in [a, b]$, and zero otherwise, and 
\[[a, b]:=\set{t : a\leq t\leq b}.\]
Let ${c_1, c_2, c_3, \cdots, c_n}$ be the $n$ elements that $\ga_n$ contains from the closed interval $[a, b]$ including the endpoint $a$, i.e., $c_1=a$. Let $c_n=d$, where $d\in [a, b]$ is such that $d<b$. 
Since the closed interval $[a, d]$ is a line segment and $P$ is a uniform distribution, we have 
\[c_2-c_1=c_3-c_2=\cdots=c_{n}-c_{n-1}=\frac{c_n-c_1}{n-1}=\frac {d-a}{n-1}\]
implying 
\begin{align*}
c_2&=c_1+\frac {d-a}{n-1}=a+\frac {d-a}{n-1},\\
c_3&=c_2+\frac {d-a}{n-1}=a+\frac {2(d-a)}{n-1}, \\
c_4&=c_3+\frac {d-a}{n-1}=a+\frac {3(d-a)}{n-1},\\
&\te{and so on.}
\end{align*} 
Thus, we have $c_j= a+\frac{j-1}{n-1}(d-a)$ for $ 1\leq j\leq n$. The distortion error contributed by the $n$ elements is given by
 \begin{align*}
V(P; \ga_n)&=\int\min_{x\in \ga_n} \rho(t, x)\, dP\\
&=\frac 1{b-a} \Big( \int_{c_1}^{\frac  {c_1+c_2}2} \rho(t, c_1)\,dt+(n-2) \int_{\frac  {c_1+c_2}2}^{\frac  {c_2+c_3}2} \rho(t, c_2)\,dt \\
&\qquad +\int_{\frac  {c_{n-1}+c_n}2}^b \rho(t, c_n)\,dt\Big)\\
&=\frac{\frac{(a-d)^3}{(n-1)^2}-4 (b-d)^3}{12 (a-b)},
\end{align*} 
the minimum value of which is $\frac{(b-a)^2}{3 (2 n-1)^2}$ and it occurs when $d=b-\frac{b-a}{2 n-1}$. Putting the values of $d$, we have
\[c_j=a+\frac{2(j-1)(b-a)}{2n-1} \te{ for } 1\leq j\leq n\] 
with the conditional unconstrained quantization error 
 \[V_n=\frac{(b-a)^2}{3 (2 n-1)^2}.\]
Thus, the proof of the proposition is complete. 
  \end{proof}
  
   \begin{prop} \label{Me050} 
Let $P$ be a uniform distribution on the closed interval $[a, b]$. Let $\ga_n$ be an optimal set of $n$-points for $P$ such that $\ga_n$ contains $n$ elements from the closed interval $[a, b]$ including the endpoint $b$. Then
\[\ga_n=\left\{a+\frac{(2j-1)(b-a)}{2n-1} : 1\leq j\leq n\right\},\] 
with the conditional unconstrained quantization error 
 \[V_n=\frac{(b-a)^2}{3 (2 n-1)^2}.\]
\end{prop} 
 \begin{proof}
As mentioned in Proposition~\ref{Me0}, here the probability density function is given by $f(x)=\frac {1}{b-a}$ if $x\in [a, b]$, and zero otherwise, and 
\[[a, b]:=\set{t : a\leq t\leq b}.\]
Let ${c_1, c_2, c_3, \cdots, c_n}$ be the $n$ elements that $\ga_n$ contains from the closed interval $[a, b]$ including the endpoint $b$, i.e., $c_n=b$.  Let $c_1=d$, where $d\in [a, b]$ is such that $a<d$. 
Since the closed interval $[c_1, b]$ is a line segment and $P$ is a uniform distribution, we have 
\[c_2-c_1=c_3-c_2=\cdots=c_{n}-c_{n-1}=\frac{c_n-c_1}{n-1}=\frac {b-d}{n-1}\]
implying 
\begin{align*}
c_2&=c_1+\frac {b-a}{n-1}=d+\frac {b-d}{n-1},\\
c_3&=c_2+\frac {b-a}{n-1}=d+\frac {2(b-d)}{n-1}, \\
c_4&=c_3+\frac {b-a}{n-1}=d+\frac {3(b-d)}{n-1},\\
&\te{and so on.}
\end{align*} 
Thus, we have $c_j= d+\frac{j-1}{n-1}(b-d)$ for $ 1\leq j\leq n$. The distortion error contributed by the $n$ elements is given by
 \begin{align*}
V(P; \ga_n)&=\int\min_{x\in \ga_n} \rho(t, x)\, dP\\
&=\frac 1{b-a} \Big( \int_{a}^{\frac  {c_1+c_2}2} \rho(t, c_1)\,dt+(n-2) \int_{\frac  {c_1+c_2}2}^{\frac  {c_2+c_3}2} \rho(t, c_2)\,dt \\
&\qquad +\int_{\frac  {c_{n-1}+c_n}2}^{c_n} \rho(t, c_n)\,dt\Big)\\
&=\frac{4 (a-d)^3+\frac{(d-b)^3}{(n-1)^2}}{12 (a-b)},
\end{align*} 
the minimum value of which is $\frac{(b-a)^2}{3 (2 n-1)^2}$ and it occurs when $d=a+\frac{b-a}{2 n-1}$. Putting the values of $d$, we have
\[c_j=a+\frac{(2j-1)(b-a)}{2n-1} \te{ for } 1\leq j\leq n\] 
with the conditional unconstrained quantization error 
 \[V_n=\frac{(b-a)^2}{3 (2 n-1)^2}.\]
Thus, the proof of the proposition is complete. 
  \end{proof}
  
  \begin{remark}
 Although a detailed proof is given, the elements in $\ga_n$ in Proposition~\ref{Me050} can be obtained from the elements in $\ga_n$ given in Proposition~\ref{Me030} by translating the elements to the right in the amount of $\frac{b-a}{2 n-1}$.
  \end{remark} 
  
 The following theorem is known. 
\begin{theorem}(see \cite[Proposition~3.1]{PR1}) \label{sec2Theorem1}
Let $P$ be a Borel probability measure on $\D R^2$ such that $P$ is uniform on its support $\set{(x, y) \in \D R^2 : a\leq x\leq b \te{ and } y=0}$. For $n\in \D N$ with $n\geq 2$, let $\ga_n:=\{(a_i,m a_i+c): 1\leq i \leq n\}$ be an optimal set of $n$-points for $P$ so that the elements in the optimal sets lie on the line $L$ between the two elements $(d, md+c)$ and $(e, me+c)$, where $d, e\in \D R$ with $d<e$. Assume that
\[\max\set{a, (m^2+1)d+mc}=a \te{ and } \min\set{b, (m^2+1) e+mc}=b.\]
 Then, $a_i=\frac {2i-1}{2n(1+m^2)}(b-a)+\frac{a-cm}{1+m^2}$ for $1\leq i\leq n$ with constrained quantization error
\[V_2=\frac{a^2 \left(16 m^2+1\right)+2 a b \left(8 m^2-1\right)+48 a c m+b^2 \left(16 m^2+1\right)+48 b c m+48 c^2}{48 \left(m^2+1\right)} \te{ if } n=2,\]
and if $n\geq 3$, then
 \begin{align*}
V_n=&\frac 1{12 \left(m^2+1\right) n^3}\Big(-48 (a - b)^2 m^2 + (a - b) (a - b + 72 c m + 8 (11 a - 2 b) m^2) n \\
&\qquad \qquad -
 12 (a - b) m (5 c + (4 a + b) m) n^2 + 12 (c + a m)^2 n^3\Big).
\end{align*} 
 \end{theorem}

 \section{Conditional constrained quantization for a uniform distribution on the boundary of a semicircular disc} \label{secMe1}
 Let $L$ be the boundary of the semicircular disc $x_1^2+x_2^2=1$, where $x_2\geq 0$. Let the base of the semicircular disc be $AOB$, where $A$ and $B$ have the coordinates $(-1, 0)$ and $(1,0$), and $O$ is the origin $(0, 0)$. Let $s$ represent the distance of any point on $L$ from the origin tracing along the boundary $L$ in the counterclockwise direction. Notice that $L=L_1\uu L_2$,
where
\begin{align*}
L_1&=\set{(x_1, x_2) : x_1=t, \, x_2=0 \te{ for } -1\leq t\leq 1}, \te{ and }\\
L_2&=\set{(x_1, x_2) : x_1=\cos t, \, x_2=\sin t\te{ for } 0\leq t\leq \pi}.
\end{align*}
Let $P$ be the uniform distribution on the boundary of the semicircular disc. Then, the probability density function for $P$ is given by 
\begin{align*}
 f(x_1, x_2)=\left\{\begin{array}{cc}
\frac 1 {2+\pi}  & \te{ if } (x_1, x_2) \in L,\\
0 & \te{ otherwise.}
\end{array}\right.  
\end{align*}
 On both $L_1$ and $L_2$, we have $ds=\sqrt{(\frac {dx_1}{dt})^2 +(\frac {dx_2}{dt})^2} \, dt=dt$ yielding $dP(s)=P(ds)=f(x_1, x_2)ds=f(x_1, x_2) dt$.
In the definition of conditional constrained quantization error, let $\gb:=\set{(-1, 0), (1, 0)}$, and let $L$ be the constraint. Upon the given set $\gb$, the $n$th conditional constrained quantization errors are defined for all $n\geq 2$. Notice that the boundary $L$ has `maximum symmetry' with respect to the $y$-axis. By the maximum symmetry of $L$ with respect to the $y$-axis, it is meant that if two regions of the same geometrical shapes are equidistant and are on opposite sides from the line, then they have the same probability. In finding the conditional optimal sets of $n$-points, we will use this property.  

Notice that the conditional optimal set of two-points is the set $\gb$ itself. In the sequel of this section, we investigate the conditional optimal sets of $n$-points for $n\geq 3$. 
Let us now give the following proposition, which plays a vital role in this section. 

\begin{prop} \label{Me1} 
Let $\ga_n$ be a conditional optimal set of $n$-points for $P$ for some $n\geq 3$. Let $\te{card}(\ga_n\ii L_1)=n_1$  and $\te{card}(\ga_n\ii L_2)=n_2$  with corresponding conditional quantization error $V_n:=V_{n_1, n_2}(P)$ for some $n_1, n_2\geq 2$. Then, 
\[\ga_n\ii L_1=\set{(-1+\frac{2(j-1)}{n_1-1}, 0) : 1\leq j\leq n_1}, \te{ and } \ga_n\ii L_2=\set{(\cos \frac{(j-1)\pi}{n_2-1}, \sin\frac {(j-1)\pi}{n_2-1}) : 1\leq j\leq n_2},\]
with  
\[V_{n_1, n_2}(P)=\frac{2}{3 (2+\pi )} \Big(\frac{1}{(n_1-1){}^2}-6 (n_2-1) \sin \Big(\frac{\pi }{2(n_2-1)}\Big)+3 \pi \Big).\]
\end{prop}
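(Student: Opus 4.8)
The plan is to use the splitting $L=L_1\uu L_2$ together with the reflection symmetry of $P$ about the $y$-axis to break the distortion into a contribution from the diameter $L_1$ and a contribution from the semicircular arc $L_2$, to treat the first by Proposition~\ref{Me0}, and to reduce the second — after passing to arc-length coordinates on $L_2$ — to a one-variable convex minimization.

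I would begin by recording the rigid part of the structure. Since $\gb=\set{(-1,0),(1,0)}\ci\ga_n\uu\gb$ and each of the two points of $\gb$ lies on both $L_1$ and $L_2$, any conditional optimal set necessarily contains both endpoints of $L_1$ and both endpoints of $L_2$; hence $\ga_n\ii L_1$ consists of $(\pm1,0)$ together with $n_1-2$ points in the open segment, and likewise on $L_2$. The decisive reduction is the claim that, in an optimal configuration, the part of $P$ carried by $L_1$ is served entirely by the centers lying on $L_1$, and the part carried by $L_2$ by the centers lying on $L_2$. One half of this is automatic from a distance comparison: for $x=(t,0)\in L_1$ with $t>0$ one has $d(x,(1,0))^2=1-2t+t^2\le 1-2t\cos\gq+t^2=d(x,(\cos\gq,\sin\gq))^2$ for every $\gq\in(0,\pi)$, and symmetrically with $(-1,0)$ when $t<0$; as $(\pm1,0)$ are themselves centers on $L_1$, this yields $\int_{L_1}\min_{a\in\ga_n\uu\gb}d(x,a)^2\,dP=\int_{L_1}\min_{a\in(\ga_n\uu\gb)\ii L_1}d(x,a)^2\,dP$. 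The opposite statement for $L_2$ — that no center in the open segment $L_1$ captures any arc-mass — is the step that genuinely uses the (constrained) stationarity of an optimal set together with the geometry of the arc, and I expect it to be the main obstacle; it is also where one sees that only suitably ``balanced'' pairs $(n_1,n_2)$ can actually occur for a true conditional optimal set.

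Granting the clean decomposition, the $L_1$-contribution is handled by Proposition~\ref{Me0} applied on $[-1,1]$ with $m=n_1$: it forces the $n_1$ centers on $L_1$ to be equally spaced, $\ga_n\ii L_1=\set{(-1+\frac{2(j-1)}{n_1-1},0):1\le j\le n_1}$, and, after rescaling the uniform-probability error $\frac1{3(n_1-1)^2}$ by the density ratio $\frac2{2+\pi}$ (since $P$ restricted to $L_1$ has constant density $\frac1{2+\pi}$), it makes the $L_1$-contribution equal to $\frac2{3(2+\pi)(n_1-1)^2}$.

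For the arc I would parametrize $L_2$ by arc length $t\in[0,\pi]$ and place the $n_2$ centers at parameters $0=t_1<t_2<\dots<t_{n_2}=\pi$ (the endpoints forced as above). The chord length between the arc points at parameters $\phi$ and $t_j$ is $2|\sin\frac{\phi-t_j}2|$, which is increasing in $|\phi-t_j|$ on $[0,\pi]$, so the Voronoi boundaries on the arc fall at the parameter midpoints $\frac{t_i+t_{i+1}}2$. Setting $w_i=\frac{t_{i+1}-t_i}2$ and using $\int 4\sin^2\frac u2\,du=2u-2\sin u$, each gap contributes $\frac1{2+\pi}\cdot 2(2w_i-2\sin w_i)$ to the distortion, so the $L_2$-contribution equals $\frac4{2+\pi}\sum_{i=1}^{n_2-1}(w_i-\sin w_i)$ subject to $\sum_{i=1}^{n_2-1}w_i=\frac\pi2$. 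As $\sin$ is concave on $[0,\frac\pi2]$, Jensen's inequality gives $\sum_i\sin w_i\le(n_2-1)\sin\frac{\pi}{2(n_2-1)}$ with equality exactly when all $w_i$ coincide; hence the minimizer has $w_i=\frac{\pi}{2(n_2-1)}$, so $t_j=\frac{(j-1)\pi}{n_2-1}$, which gives the stated form of $\ga_n\ii L_2$, and the $L_2$-contribution equals $\frac4{2+\pi}\big(\frac\pi2-(n_2-1)\sin\frac\pi{2(n_2-1)}\big)=\frac2{2+\pi}\big(\pi-2(n_2-1)\sin\frac\pi{2(n_2-1)}\big)$. Adding the two contributions and factoring out $\frac2{3(2+\pi)}$ gives $V_{n_1,n_2}(P)=\frac2{3(2+\pi)}\big(\frac1{(n_1-1)^2}-6(n_2-1)\sin\frac\pi{2(n_2-1)}+3\pi\big)$, as asserted. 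Everything past the clean-Voronoi reduction is Proposition~\ref{Me0} plus a one-line convex minimization, so that reduction carries the weight of the proof.
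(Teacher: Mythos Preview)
Your argument is correct and follows the paper's overall strategy: split the distortion into an $L_1$-piece handled by Proposition~\ref{Me0} and an $L_2$-piece handled separately, then add. The paper's own proof is considerably terser at both of the places where you invest effort. For the arc, the paper simply asserts that ``$L_2$ being a circular arc and $P$ a uniform distribution'' forces equal angular spacing, by analogy with Proposition~\ref{Me0}, and then evaluates the resulting integral directly; your Jensen/concavity argument on $\sum_i\sin w_i$ is a genuine proof of that optimality step rather than an appeal to analogy, and it is a cleaner justification than what the paper offers. As for the clean Voronoi decomposition between $L_1$ and $L_2$ that you flag as the main obstacle, the paper does not address it at all: it treats the proposition as a computation of the distortion for the configuration with $n_1$ equally spaced points on $L_1$ and $n_2$ equally spaced points on $L_2$, each piece serving only its own arc, and then uses this formula downstream (Propositions~\ref{Me200}, \ref{Me300}, Theorem~\ref{theo1}) by comparing $V_{n_1,n_2}$ across admissible splits. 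So your worry is legitimate but orthogonal to what the paper actually proves; you have supplied more than the paper does, not less.
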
 
 
 \begin{proof}
By the hypothesis,  $\te{card}(\ga_n\ii L_1)=n_1$  and $\te{card}(\ga_n\ii L_2)=n_2$ for some $n_1, n_2\geq 2$. The proof of 
\[\ga_n\ii L_1=\set{(-1+\frac{2(j-1)}{n_1-1}, 0) : 1\leq j\leq n_1}\]
directly follows from Proposition~\ref{Me0}. Moreover, as shown in Proposition~\ref{Me0}, the distortion error for the $n_1$ elements is obtained as 
\[V(P; \ga_n\ii L_1)=\frac 2{3(2+\pi)}  \frac {1}{(n_1-1)^2}.\]
Let $\ga_n\ii L_2=\set{(\cos \gq_j, \sin \gq_j) : 1\leq j\leq n_2}$,
where $\gq_1=0$, and $\gq_{n_2}=\pi$. 
$L_2$ being a circular arc and $P$ is a uniform distribution, we have 
\[\gq_2-\gq_1=\gq_3-\gq_2=\cdots=\gq_{n_2}-\gq_{n_2-1}=\frac{\gq_{n_2}-\gq_1}{n_2-1}=\frac \pi {n_2-1}.\]
Thus, proceeding in a similar way as Proposition~\ref{Me0}, we have 
$\gq_j=\frac{(j-1)\pi}{n_2-1}$ for $1\leq j\leq n_2$ yielding 
\[\ga_n\ii L_2=\set{(\cos \frac{(j-1)\pi}{n_2-1}, \sin\frac {(j-1)\pi}{n_2-1}) : 1\leq j\leq n_2}.\]
The distortion error due to these $n_2$ elements is obtained as 
\begin{align*}
&V(P; \ga_n\ii L_2)\\
&=\frac 1{2+\pi} \Big(2\int_{0}^{\frac{\gq_1+\gq_2}{2}} \rho((\cos t, \sin t), (1, 0))\,dt+(n_2-2) \int_{\frac  {\gq_1+\gq_2}2}^{\frac  {\gq_2+\gq_3}2} \rho((\cos t, \sin t), (\cos \gq_2, \sin \gq_2))\,dt\Big)\\
&=\frac{2}{3 (2+\pi )} \Big(-6 (n_2-1) \sin \Big(\frac{\pi }{2(n_2-1)}\Big)+3 \pi \Big).
\end{align*} 
Thus, we have 
\begin{align*}
V_n&=V_{n_1, n_2}(P)=V(P; \ga_n\ii L_1)+V(P; \ga_n\ii L_2)\\
&=  \frac{2}{3 (2+\pi )} \Big(\frac{1}{(n_1-1){}^2}-6 (n_2-1) \sin \Big(\frac{\pi }{2(n_2-1)}\Big)+3 \pi \Big).
\end{align*} 
Hence, the proof of the proposition is complete. 
\end{proof}

\begin{remark}
Proposition~\ref{Me1} plays a significant role in calculating the optimal sets of $n$-points for the probability distribution $P$ on the boundary of the semicircular disc, as shown in the following two propositions. Recall that any element $x\in \D R$ can be identified as an element $(x, 0)\in \D R^2$.
\end{remark} 
\begin{prop}\label{Me200} 
 The conditional optimal set of three-points is given by $\set{(1, 0), (0, 1), (-1, 0)}$ with conditional constrained quantization error $V_3=\frac 2{2+\pi}(-2 \sqrt{2}+\frac{1}{3}+\pi)$. 
 \end{prop}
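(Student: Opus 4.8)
The plan is to reduce the problem of finding the conditional optimal set of three points to a small finite optimization using Proposition~\ref{Me1}, and then verify optimality against all competing configurations. Since $\gb=\set{(-1,0),(1,0)}\ci L_1$ and $n=3$, we must add exactly one extra point $a$, so $\ga_3=\set{a}$ with $a\in L=L_1\uu L_2$. First I would split into the two cases $a\in L_1$ and $a\in L_2$ (the overlap points $(\pm1,0)$ being already in $\gb$ and hence degenerate/non-optimal).

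In the case $a\in L_2$, the set $\ga_3\uu\gb$ has $n_1=2$ points on $L_1$ and $n_2=3$ points on $L_2$ in the sense of Proposition~\ref{Me1} — here one has to check that the Voronoi structure is consistent, i.e.\ that with $\gb=\set{(\pm1,0)}$ fixed the middle arc point is forced to be $(\cos(\pi/2),\sin(\pi/2))=(0,1)$ by the equidistribution argument of Proposition~\ref{Me0}/\ref{Me1} applied to the arc $L_2$. Then Proposition~\ref{Me1} with $n_1=2,\ n_2=3$ gives directly
\[
V_{2,3}(P)=\frac{2}{3(2+\pi)}\Big(1-12\sin\Big(\frac{\pi}{4}\Big)+3\pi\Big)=\frac{2}{2+\pi}\Big(\frac13-2\sqrt2+\pi\Big),
\]
which is the claimed value, and the optimal point on $L_2$ is $(0,1)$. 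In the case $a\in L_1$, we instead have $n_1=3,\ n_2=2$; by Proposition~\ref{Me1} the three collinear points are forced to be $\set{(-1,0),(0,0),(1,0)}$ with error $V_{3,2}(P)=\frac{2}{3(2+\pi)}\big(\frac14-6\sin(\pi/2)+3\pi\big)$ after substituting $n_1=3,n_2=2$ — wait, more carefully, with $n_2=2$ the arc contributes its full "one-point" distortion; I would compute this value explicitly and compare. The core of the argument is the numerical comparison showing $V_{2,3}(P)<V_{3,2}(P)$, together with ruling out any configuration where the added point does not interact cleanly with $\gb$ (e.g.\ where the Voronoi region of one of the $\gb$-points has zero probability, excluded by the definition of conditional optimal set).

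The main obstacle I anticipate is not the algebra but the \emph{justification that Proposition~\ref{Me1} applies}, i.e.\ that any conditional optimal three-point set must have its points distributed according to the rigid patterns of Proposition~\ref{Me1}; this rests on the fact that in an optimal (unconstrained-on-$L$, hence constrained-to-$L$) configuration the non-conditional point must be the conditional mean of its Voronoi region along the curve, forcing the equi-spacing, and on the maximum-symmetry property of $L$ with respect to the $y$-axis which pins the candidate on $L_2$ to the top of the arc. Once that structural rigidity is in hand, the proof closes by: (i) listing the finitely many admissible $(n_1,n_2)$ with $n_1+n_2-2=3$, namely $(2,3)$ and $(3,2)$; (ii) evaluating $V_{n_1,n_2}(P)$ from Proposition~\ref{Me1} in each case; (iii) selecting the minimum, which is $V_3=V_{2,3}(P)=\frac{2}{2+\pi}\big(-2\sqrt2+\frac13+\pi\big)$, attained at $\ga_3\uu\gb=\set{(1,0),(0,1),(-1,0)}$; and (iv) checking $P(M(b\mid\ga_3\uu\gb))>0$ for $b\in\gb$, which is immediate since each of $(\pm1,0)$ clearly owns a positive-length sub-arc of $L$. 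This completes the verification that the displayed set is the conditional optimal set of three points.
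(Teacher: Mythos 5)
Your proposal is correct and takes essentially the same route as the paper: both reduce to the two admissible cases $(n_1,n_2)\in\set{(3,2),(2,3)}$, invoke the maximum symmetry of $L$ about the $y$-axis to pin the extra point at $(0,0)$ or $(0,1)$, evaluate the error via Proposition~\ref{Me1}, and select the minimum $V_{2,3}(P)$. Your write-up is more explicit about why Proposition~\ref{Me1} applies and about verifying $P(M(b\mid\ga_3\uu\gb))>0$, but the structure of the argument is the same.
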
 
\begin{proof}
Let $\ga$ be an optimal set of three-points. By the hypothesis $(1, 0), (-1 ,0)\in \ga$. Due to maximum symmetry, we can assume that either $(0, 0)$ or $(0, 1) \in \ga$. If $(0, 0) \in \ga$, then as $n_1=3$ and $n_2=2$, using Proposition~\ref{Me1}, we have the distortion error as 
$V_{3, 2}(P)=0.476477.$
If $(0, 1) \in\ga$, then as $n_1=2$ and $n_2=3$,  we have $V_{2, 3}(P)=0.251478$. Since $V_{2, 3}(P)<V_{3, 2}(P)$, the conditional optimal set of three-points is given by $\set{(1, 0), (0, 1), (-1, 0)}$ with conditional constrained quantization error $V_3=\frac 2{2+\pi}(-2 \sqrt{2}+\frac{1}{3}+\pi)$. 
Thus, the proof of the proposition is complete (see Figure~\ref{Fig1}). 
\end{proof}

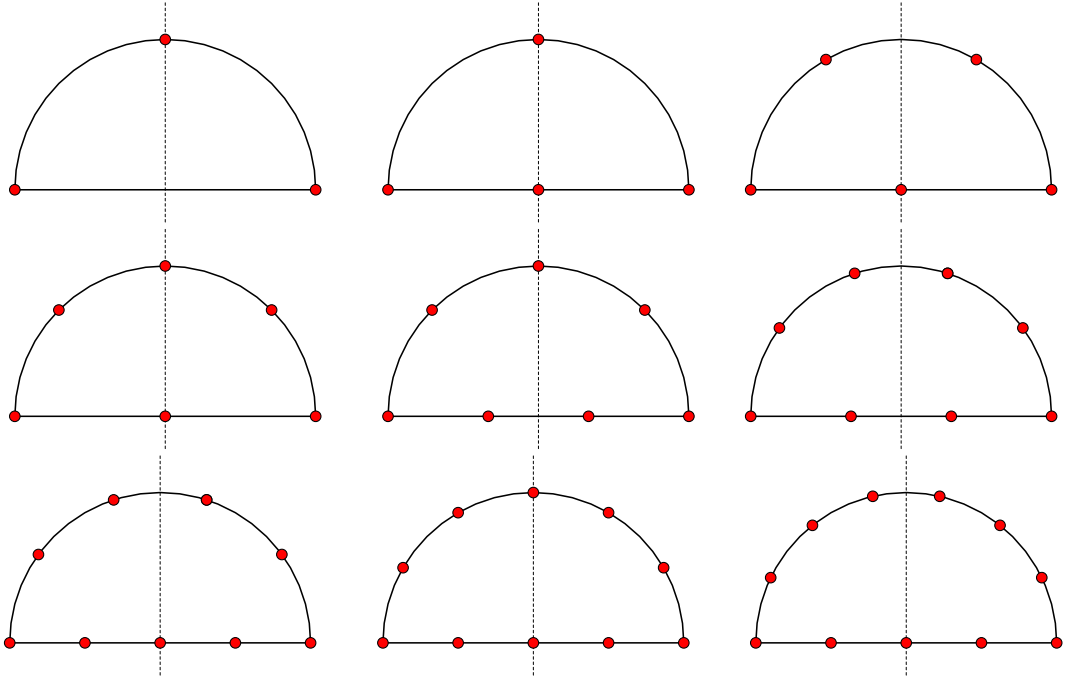
\begin{figure}
\begin{tikzpicture}[line cap=round,line join=round,>=triangle 45, x=1.0cm,y=1.0cm]
\clip(-2.1012214842676586,-0.438179149664134) rectangle (2.308087482904808,2.5425819497016167);
\draw [shift={(0.,0.)},line width=0.59 pt]  plot[domain=0.:3.141592653589793,variable=\t]({1.*2.*cos(\t r)+0.*2.*sin(\t r)},{0.*2.*cos(\t r)+1.*2.*sin(\t r)});
\draw [line width=0.59 pt ] (-2.,0.)-- (2.,0.);
\draw [line width=0.2 pt,dash pattern=on 1pt off 1pt] (0.,-0.5)-- (0.,2.5);
\begin{scriptsize}
\draw [fill=ffqqqq] (0, 2) circle (2.0pt);
\draw [fill=ffqqqq] (2, 0) circle (2.0pt);
\draw [fill=ffqqqq] (-2, 0) circle (2.0pt);
\end{scriptsize}
\end{tikzpicture}
\quad 
\begin{tikzpicture}[line cap=round,line join=round,>=triangle 45, x=1.0cm,y=1.0cm]
\clip(-2.1012214842676586,-0.438179149664134) rectangle (2.308087482904808,2.5425819497016167);
\draw [shift={(0.,0.)},line width=0.59 pt]  plot[domain=0.:3.141592653589793,variable=\t]({1.*2.*cos(\t r)+0.*2.*sin(\t r)},{0.*2.*cos(\t r)+1.*2.*sin(\t r)});
\draw [line width=0.59 pt ] (-2.,0.)-- (2.,0.);
\draw [line width=0.2 pt,dash pattern=on 1pt off 1pt] (0.,-0.5)-- (0.,2.5);
\begin{scriptsize}
\draw [fill=ffqqqq] (0, 2) circle (2.0pt);
\draw [fill=ffqqqq] (2, 0) circle (2.0pt);
\draw [fill=ffqqqq] (-2, 0) circle (2.0pt);
\draw [fill=ffqqqq] (0, 0) circle (2.0pt);
\end{scriptsize}
\end{tikzpicture}\quad 
\begin{tikzpicture}[line cap=round,line join=round,>=triangle 45, x=1.0cm,y=1.0cm]
\clip(-2.1012214842676586,-0.438179149664134) rectangle (2.308087482904808,2.5425819497016167);
\draw [shift={(0.,0.)},line width=0.59 pt]  plot[domain=0.:3.141592653589793,variable=\t]({1.*2.*cos(\t r)+0.*2.*sin(\t r)},{0.*2.*cos(\t r)+1.*2.*sin(\t r)});
\draw [line width=0.59 pt ] (-2.,0.)-- (2.,0.);
\draw [line width=0.2 pt,dash pattern=on 1pt off 1pt] (0.,-0.5)-- (0.,2.5);
\begin{scriptsize}
\draw [fill=ffqqqq] (-2,0) circle (2.0pt);
\draw [fill=ffqqqq] (0,0) circle (2.0pt);
\draw [fill=ffqqqq] (2,0) circle (2.0pt);
\draw [fill=ffqqqq] (1,1.73205) circle (2.0pt);
\draw [fill=ffqqqq] (-1,1.73205) circle (2.0pt);
\end{scriptsize}
\end{tikzpicture}

 \begin{tikzpicture}[line cap=round,line join=round,>=triangle 45, x=1.0cm,y=1.0cm]
\clip(-2.1012214842676586,-0.438179149664134) rectangle (2.308087482904808,2.5425819497016167);
\draw [shift={(0.,0.)},line width=0.59 pt]  plot[domain=0.:3.141592653589793,variable=\t]({1.*2.*cos(\t r)+0.*2.*sin(\t r)},{0.*2.*cos(\t r)+1.*2.*sin(\t r)});
\draw [line width=0.59 pt ] (-2.,0.)-- (2.,0.);
\draw [line width=0.2 pt,dash pattern=on 1pt off 1pt] (0.,-0.5)-- (0.,2.5);
\begin{scriptsize}
\draw [fill=ffqqqq] (-2,0) circle (2.0pt);
\draw [fill=ffqqqq] (0,0) circle (2.0pt);
\draw [fill=ffqqqq] (2,0) circle (2.0pt);
\draw [fill=ffqqqq] (0, 2) circle (2.0pt);
\draw [fill=ffqqqq] (1.41421, 1.41421) circle (2.0pt);
\draw [fill=ffqqqq] (-1.41421, 1.41421) circle (2.0pt);
\end{scriptsize}
\end{tikzpicture}
\quad 
\begin{tikzpicture}[line cap=round,line join=round,>=triangle 45, x=1.0cm,y=1.0cm]
\clip(-2.1012214842676586,-0.438179149664134) rectangle (2.308087482904808,2.5425819497016167);
\draw [shift={(0.,0.)},line width=0.59 pt]  plot[domain=0.:3.141592653589793,variable=\t]({1.*2.*cos(\t r)+0.*2.*sin(\t r)},{0.*2.*cos(\t r)+1.*2.*sin(\t r)});
\draw [line width=0.59 pt ] (-2.,0.)-- (2.,0.);
\draw [line width=0.2 pt,dash pattern=on 1pt off 1pt] (0.,-0.5)-- (0.,2.5);
\begin{scriptsize}
\draw [fill=ffqqqq] (-2,0) circle (2.0pt);
\draw [fill=ffqqqq] (-0.666667, 0.) circle (2.0pt);
\draw [fill=ffqqqq] (0.666667, 0.) circle (2.0pt);
\draw [fill=ffqqqq] (2,0) circle (2.0pt);
\draw [fill=ffqqqq] (0, 2) circle (2.0pt);
\draw [fill=ffqqqq] (1.41421, 1.41421) circle (2.0pt);
\draw [fill=ffqqqq] (-1.41421, 1.41421) circle (2.0pt);
\end{scriptsize}
\end{tikzpicture}\quad 
\begin{tikzpicture}[line cap=round,line join=round,>=triangle 45, x=1.0cm,y=1.0cm]
\clip(-2.1012214842676586,-0.438179149664134) rectangle (2.308087482904808,2.5425819497016167);
\draw [shift={(0.,0.)},line width=0.59 pt]  plot[domain=0.:3.141592653589793,variable=\t]({1.*2.*cos(\t r)+0.*2.*sin(\t r)},{0.*2.*cos(\t r)+1.*2.*sin(\t r)});
\draw [line width=0.59 pt ] (-2.,0.)-- (2.,0.);
\draw [line width=0.2 pt,dash pattern=on 1pt off 1pt] (0.,-0.5)-- (0.,2.5);
\begin{scriptsize}
\draw [fill=ffqqqq] (-2,0) circle (2.0pt);
\draw [fill=ffqqqq] (-0.666667, 0.) circle (2.0pt);
\draw [fill=ffqqqq] (0.666667, 0.) circle (2.0pt);
\draw [fill=ffqqqq] (2,0) circle (2.0pt);
\draw [fill=ffqqqq] (1.61803, 1.17557) circle (2.0pt);
\draw [fill=ffqqqq] (-1.61803, 1.17557) circle (2.0pt);
\draw [fill=ffqqqq] (0.618034, 1.90211) circle (2.0pt);
\draw [fill=ffqqqq] (-0.618034, 1.90211) circle (2.0pt);
\draw [fill=ffqqqq] (0.618034, 1.90211) circle (2.0pt);
\end{scriptsize}
\end{tikzpicture}

\begin{tikzpicture}[line cap=round,line join=round,>=triangle 45, x=1.0cm,y=1.0cm]
\clip(-2.1012214842676586,-0.438179149664134) rectangle (2.308087482904808,2.5425819497016167);
\draw [shift={(0.,0.)},line width=0.59 pt]  plot[domain=0.:3.141592653589793,variable=\t]({1.*2.*cos(\t r)+0.*2.*sin(\t r)},{0.*2.*cos(\t r)+1.*2.*sin(\t r)});
\draw [line width=0.59 pt ] (-2.,0.)-- (2.,0.);
\draw [line width=0.2 pt,dash pattern=on 1pt off 1pt] (0.,-0.5)-- (0.,2.5);
\begin{scriptsize}
\draw [fill=ffqqqq] (-2,0) circle (2.0pt);
\draw [fill=ffqqqq] (0, 0.) circle (2.0pt);
\draw [fill=ffqqqq] (-1, 0.) circle (2.0pt);
\draw [fill=ffqqqq] (1, 0.) circle (2.0pt);
\draw [fill=ffqqqq] (2,0) circle (2.0pt);
\draw [fill=ffqqqq] (1.61803, 1.17557) circle (2.0pt);
\draw [fill=ffqqqq] (-1.61803, 1.17557) circle (2.0pt);
\draw [fill=ffqqqq] (0.618034, 1.90211) circle (2.0pt);
\draw [fill=ffqqqq] (-0.618034, 1.90211) circle (2.0pt);
\draw [fill=ffqqqq] (0.618034, 1.90211) circle (2.0pt);
\end{scriptsize}
\end{tikzpicture}
\quad 
\begin{tikzpicture}[line cap=round,line join=round,>=triangle 45, x=1.0cm,y=1.0cm]
\clip(-2.1012214842676586,-0.438179149664134) rectangle (2.308087482904808,2.5425819497016167);
\draw [shift={(0.,0.)},line width=0.59 pt]  plot[domain=0.:3.141592653589793,variable=\t]({1.*2.*cos(\t r)+0.*2.*sin(\t r)},{0.*2.*cos(\t r)+1.*2.*sin(\t r)});
\draw [line width=0.59 pt ] (-2.,0.)-- (2.,0.);
\draw [line width=0.2 pt,dash pattern=on 1pt off 1pt] (0.,-0.5)-- (0.,2.5);
\begin{scriptsize}
\draw [fill=ffqqqq] (-2,0) circle (2.0pt);
\draw [fill=ffqqqq] (0, 0.) circle (2.0pt);
\draw [fill=ffqqqq] (-1, 0.) circle (2.0pt);
\draw [fill=ffqqqq] (1, 0.) circle (2.0pt);
\draw [fill=ffqqqq] (2,0) circle (2.0pt);
\draw [fill=ffqqqq] (1.73205, 1) circle (2.0pt);
\draw [fill=ffqqqq] (1., 1.73205) circle (2.0pt);
\draw [fill=ffqqqq] (0., 2) circle (2.0pt);
\draw [fill=ffqqqq] (-1., 1.73205) circle (2.0pt);
\draw [fill=ffqqqq] (-1.73205, 1.) circle (2.0pt);
\end{scriptsize}
\end{tikzpicture}
\quad 
\begin{tikzpicture}[line cap=round,line join=round,>=triangle 45, x=1.0cm,y=1.0cm]
\clip(-2.1012214842676586,-0.438179149664134) rectangle (2.308087482904808,2.5425819497016167);
\draw [shift={(0.,0.)},line width=0.59 pt]  plot[domain=0.:3.141592653589793,variable=\t]({1.*2.*cos(\t r)+0.*2.*sin(\t r)},{0.*2.*cos(\t r)+1.*2.*sin(\t r)});
\draw [line width=0.59 pt ] (-2.,0.)-- (2.,0.);
\draw [line width=0.2 pt,dash pattern=on 1pt off 1pt] (0.,-0.5)-- (0.,2.5);
\begin{scriptsize}
\draw [fill=ffqqqq] (-2,0) circle (2.0pt);
\draw [fill=ffqqqq] (0, 0.) circle (2.0pt);
\draw [fill=ffqqqq] (-1, 0.) circle (2.0pt);
\draw [fill=ffqqqq] (1, 0.) circle (2.0pt);
\draw [fill=ffqqqq] (2,0) circle (2.0pt);
\draw [fill=ffqqqq] (1.80194, 0.867767) circle (2.0pt);
\draw [fill=ffqqqq] (1.24698, 1.56366) circle (2.0pt);
\draw [fill=ffqqqq] (0.445042, 1.94986) circle (2.0pt);
\draw [fill=ffqqqq] (-0.445042, 1.94986) circle (2.0pt);
\draw [fill=ffqqqq] (-1.24698, 1.56366) circle (2.0pt);
\draw [fill=ffqqqq] (-1.80194, 0.867767) circle (2.0pt);
\end{scriptsize}
\end{tikzpicture}
 \caption{Optimal configuration of $n$-points for $3\leq n\leq 11$.} \label{Fig1}
\end{figure}

\begin{prop}\label{Me300} 
 The conditional optimal set of four-points is given by $\set{(0, 0), (1, 0), (0, 1), (-1, 0)}$ with conditional constrained quantization error $V_4=\frac{-24 \sqrt{2}+12 \pi +1}{12+6 \pi }$. 
 \end{prop}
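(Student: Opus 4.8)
The plan is to use Proposition~\ref{Me1} to reduce the determination of the conditional optimal set of four-points to a comparison among finitely many explicit configurations. Here $n=4$ and $\te{card}(\gb)=2$, so any conditional optimal set $\ga_4$ of four-points has the form $\ga\uu\gb$ with $\ga\ci L$, $\te{card}(\ga)\le 2$, and it necessarily contains $\gb=\set{(-1,0),(1,0)}$. Since $L_1\ii L_2=\set{(-1,0),(1,0)}\ci\gb$, the integers $n_1:=\te{card}(\ga_4\ii L_1)$ and $n_2:=\te{card}(\ga_4\ii L_2)$ always satisfy $n_1,n_2\ge 2$; moreover, if $\te{card}(\ga_4)=4$ then the two points of $\ga\sm\gb$ lie in the relative interiors of $L_1$ and/or $L_2$ (they cannot coincide with $(\pm1,0)$, nor lie in $L_1\ii L_2$ otherwise), whence $n_1+n_2=6$ and $(n_1,n_2)\in\set{(2,4),(3,3),(4,2)}$.

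First I would rule out that a conditional optimal set of four-points has fewer than four elements. The set $\set{(0,0),(1,0),(0,1),(-1,0)}$ has type $(n_1,n_2)=(3,3)$, so by Proposition~\ref{Me1} its distortion error equals $V_{3,3}(P)=\tfrac{2}{3(2+\pi)}\big(\tfrac14-6\sqrt2+3\pi\big)\approx 0.1542$. On the other hand, every set $\ga\uu\gb$ with $\te{card}(\ga)\le 1$ has distortion error at least $V_3=\tfrac{2}{2+\pi}\big(-2\sqrt2+\tfrac13+\pi\big)\approx 0.2515$ (Proposition~\ref{Me200}). Hence $V_4\le V_{3,3}(P)<V_3$, so a conditional optimal set of four-points has exactly four elements and is therefore of one of the three types above.

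Then I would invoke Proposition~\ref{Me1} for each type: it forces the $L_1$-points to be equally spaced, the $L_2$-points to be equi-angular, and yields the error $V_{n_1,n_2}(P)=\tfrac{2}{3(2+\pi)}\big(\tfrac{1}{(n_1-1)^2}-6(n_2-1)\sin\tfrac{\pi}{2(n_2-1)}+3\pi\big)$. Evaluating gives $V_{4,2}(P)=\tfrac{2}{3(2+\pi)}\big(\tfrac19-6+3\pi\big)\approx 0.4586$, $V_{2,4}(P)=\tfrac{2}{3(2+\pi)}\big(-8+3\pi\big)\approx 0.1848$, and $V_{3,3}(P)\approx 0.1542$. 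Since $V_{3,3}(P)<V_{2,4}(P)<V_{4,2}(P)$, the minimum $V_4$ is attained only by the type-$(3,3)$ configuration; by Proposition~\ref{Me1} this configuration is $\ga_4\ii L_1=\set{(-1,0),(0,0),(1,0)}$ and $\ga_4\ii L_2=\set{(1,0),(0,1),(-1,0)}$, i.e. $\ga_4=\set{(0,0),(1,0),(0,1),(-1,0)}$, with $V_4=V_{3,3}(P)=\dfrac{-24\sqrt2+12\pi+1}{12+6\pi}$ after simplification.

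I expect the only genuinely delicate point to be the bookkeeping in the first paragraph — verifying that $(2,4),(3,3),(4,2)$ is an exhaustive list of types once one knows $\te{card}(\ga_4)=4$, and establishing $V_4<V_3$ so that the infimum defining $V_4$ is realized by a four-point configuration rather than by a degenerate one with fewer points. Once these are in place, the comparison of $V_{4,2}$, $V_{2,4}$, $V_{3,3}$ and the identification of the optimal configuration are immediate from Proposition~\ref{Me1}.
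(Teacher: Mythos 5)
Your proof is correct and takes essentially the same approach as the paper: enumerate the types $(n_1,n_2)\in\{(2,4),(3,3),(4,2)\}$, apply Proposition~\ref{Me1} to compute $V_{n_1,n_2}(P)$ for each, and take the minimum. You are a bit more careful than the paper on two bookkeeping points --- you rule out optima with fewer than four elements by observing $V_{3,3}(P)<V_3$, and you obtain the exhaustive list of types from the counting identity $n_1+n_2=\mathrm{card}(\alpha_4)+\mathrm{card}(L_1\cap L_2)=6$ rather than from the paper's ``maximum symmetry'' heuristic --- but the core comparison and conclusion are identical.
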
 
\begin{proof}
Let $\ga$ be an optimal set of four-points. By the hypothesis $(1, 0), (-1 ,0)\in \ga$. Due to maximum symmetry, we can assume that the other two elements in $\ga$ are on the axis of symmetry, or they are symmetrically located on $L$. Thus, the following cases can occur: 

\tit{Case~1. The two elements in $\ga\setminus \set {(1, 0), (-1, 0)}$ are on the axis of symmetry.}

In this case, we can assume that $(0, 0) , (0, 1)\in \ga$. Then, as $n_1=3$ and $n_2=3$, the distortion error is $V_{3,3}(P)=0.154232.$

\tit{Case~2. The two elements in $\ga\setminus \set {(1, 0), (-1, 0)}$ are symmetrically located on $L$.} 

In this case, the two elements are either symmetrically located on $L_1$ or on $L_2$. If they are symmetrically located on $L_1$, then 
the distortion error is $V_{4, 2}(P)=0.458469$.  If they are symmetrically located on $L_2$, then the distortion error is $V_{2, 4}(P)=0.184739.$

Considering all the above possible distortion errors, we see that the distortion error is minimum when $n_1=3$ and $n_2=3$. Thus, $\set{(0, 0), (1, 0), (0, 1), (-1, 0)}$ forms the conditional optimal set of four-points with conditional constrained quantization error $V_4=\frac{-24 \sqrt{2}+12 \pi +1}{12+6 \pi }$, which is the proposition (see Figure~\ref{Fig1}). 
\end{proof}

Let us now give the following theorem, which gives the main result in this section. 
\begin{theo}\label{theo1}
Let $\ga_n$ be a conditional optimal set of $n$-points for $P$ for some $n\geq 3$. Let $\te{card}(\ga_n\ii L_1)=n_1$ be known. Then, 
\begin{align*}
\ga_n=\set{(-1+\frac{2(j-1)}{n_1-1}, 0) : 1\leq j\leq n_1}\UU \set{(\cos \frac{(j-1)\pi}{n-n_1+1}, \sin\frac {(j-1)\pi}{n-n_1+1}) : 2\leq j\leq n-n_1+1} 
\end{align*}
with the conditional constrained quantization error 
\[V_n=\frac{2}{3 (2+\pi )} \Big(\frac{1}{(n_1-1){}^2}-6 (n-n_1+1) \sin (\frac{\pi }{2 n-2 n_1+2})+3 \pi \Big).\]
 \end{theo}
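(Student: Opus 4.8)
The plan is to reduce the theorem to an optimization over the single integer parameter $n_1$, using Proposition~\ref{Me1} to handle the two arcs $L_1$ and $L_2$ separately once the split $(n_1,n_2)$ is fixed. First I would observe that, for a conditional optimal set $\ga_n$ of $n$-points with $(1,0),(-1,0)\in\gb$, every element of $\ga_n$ lies on $L=L_1\uu L_2$; indeed, any point of $\D R^2$ off $L$ can be radially projected onto $L$ without increasing the distance to any point of $\operatorname{supp}(P)=L$, so no off-$L$ configuration can be optimal. Next, one needs that $\ga_n\ii L_1$ contains at least the two endpoints $(-1,0),(1,0)$ and that $\ga_n\ii L_2$ contains at least the two endpoints as well (the shared endpoints $(\pm1,0)$ counted on both arcs), so that with $\te{card}(\ga_n\ii L_1)=n_1$ and $\te{card}(\ga_n\ii L_2)=n_2$ we have $n_1,n_2\geq 2$ and $n_1+n_2=n+2$ (the two endpoints are double-counted). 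This is the combinatorial bookkeeping step: the total count $n$ equals $n_1+n_2-2$, hence $n_2=n-n_1+1+1=n-n_1+2$, so $n_2-1=n-n_1+1$, which matches the exponent $n-n_1+1$ appearing in the claimed formula.

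With the split fixed, Proposition~\ref{Me1} applies verbatim and gives the positions of $\ga_n\ii L_1$ and $\ga_n\ii L_2$ as the stated equispaced sets, together with
\[
V_{n_1,n_2}(P)=\frac{2}{3(2+\pi)}\Big(\frac{1}{(n_1-1)^2}-6(n_2-1)\sin\Big(\frac{\pi}{2(n_2-1)}\Big)+3\pi\Big).
\]
Substituting $n_2-1=n-n_1+1$ yields exactly the formula in the statement; so the only remaining content is to confirm that, among all admissible splits $n_1\in\{2,3,\dots,n\}$ (equivalently $n_2=n-n_1+2\in\{2,\dots,n\}$), the optimal set realizes the particular value of $n_1$ that minimizes $V_{n_1,n_2}(P)$ — but the theorem as phrased treats $n_1$ as \emph{known}, so this minimization is not actually required: given that $\te{card}(\ga_n\ii L_1)=n_1$, the positions and error are forced. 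Thus the proof is: (i) reduce to $L$; (ii) establish $n_1+n_2=n+2$ from endpoint double-counting; (iii) quote Proposition~\ref{Me1} and substitute.

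The main obstacle is step (i)–(ii): one must rule out the degenerate possibility that an optimal $\ga_n$ fails to contain an endpoint of $L_2$ other than $(\pm1,0)$, i.e.\ that the arc $L_2$ carries $n_2$ points but not arranged with endpoints at $\gq=0,\pi$; however, since $(\pm1,0)\in\gb\ci\ga_n$ and $(\pm1,0)$ are precisely the endpoints shared by $L_1$ and $L_2$, the points of $\ga_n$ on $L_2$ automatically include these endpoints, and Proposition~\ref{Me1}'s hypothesis ($n_1,n_2\geq 2$) is met. A secondary subtlety is justifying that the distortion separates additively as $V(P;\ga_n\ii L_1)+V(P;\ga_n\ii L_2)$ — this needs that the Voronoi region of a point on $L_1$ meets $L_2$ in a $P$-null set and vice versa, which follows because on the open arc $L_2$ the nearest point of $\ga_n$ is always another point of $\ga_n\ii L_2$ (the equispaced arc points dominate the chord distances to $L_1$-points), and symmetrically; this is exactly the computation already carried out inside the proof of Proposition~\ref{Me1}, so it may be cited. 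Once these structural points are in hand, the result is immediate.
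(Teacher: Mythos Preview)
Your proposal is correct and follows the same core route as the paper: establish the bookkeeping identity $n_1+n_2=n+2$ via the two shared endpoints $(\pm1,0)$, then invoke Proposition~\ref{Me1} and substitute $n_2-1=n-n_1+1$. One remark: your step~(i) is unnecessary here, since in this section the quantization is \emph{constrained} with single constraint $S=L$, so $\ga\ci L$ holds by definition rather than by a projection argument; the paper accordingly omits that step entirely and proceeds directly to the endpoint count.
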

\begin{proof}
Let $\ga_n$ be a conditional optimal set of $n$-points for $P$. Let $\te{card}(\ga_n\ii L_1)=n_1$  and $\te{card}(\ga_n\ii L_2)=n_2$. Notice that $\ga_n\ii L_1\ii L_2=\set{(-1, 0), (0, 1)}$, and hence $n_1+n_2=n+2$ yielding $n_2=n-n_1+2$. Thus, if $n_1$ is known, one can easily calculate $n_2$, and then the conditional optimal set $\ga_n$ of $n$-points and the corresponding conditional constrained quantization error can be deduced by Proposition~\ref{Me1}, and they are given by 
\begin{align*}
\ga_n=\set{(-1+\frac{2(j-1)}{n_1-1}, 0) : 1\leq j\leq n_1}\UU \set{(\cos \frac{(j-1)\pi}{n_2-1}, \sin\frac {(j-1)\pi}{n_2-1}) : 2\leq j\leq n-n_1+1} 
\end{align*} 
with $V_n=V_{n_1, n-n_1+2}=\frac{2}{3 (2+\pi )}\Big(\frac{1}{(n_1-1){}^2}-6 (n-n_1+1) \sin (\frac{\pi }{2 n-2 n_1+2})+3 \pi \Big)$.
Thus, the proof of the theorem is complete. 
\end{proof}

For a given positive integer $n\geq 3$, to determine the positive integer $n_1$ as mentioned in Theorem~\ref{theo1}, we proceed as follows: 
 
\begin{defi} \label{difi21}
Define the sequence $\set{a(n)}$ such that $a(n)=\lfloor\frac{5 (n+4)}{13}\rfloor$ for $n\geq 1$, i.e.,
\begin{align*}
 \set{a(n)}_{n=1}^\infty=&\set{1, 2, 2, 3, 3, 3, 4, 4, 5, 5, 5, 6, 6, 6, 7, 7, 8, 8, 8, 9, 9, 10, 10, 10, 11, 11, 11, 12, 12, 13, 13, 13, \cdots},
\end{align*}
where $\lfloor x\rfloor$ represents the greatest integer not exceeding $x$.
\end{defi}

The following algorithm helps us to determine the exact value of $n_1$ mentioned in Theorem~\ref{theo1}.
\subsection{Algorithm}  Let $n\geq 3$, and let $V_n:=V_{n_1, n-n_1+2}$, as given by Theorem~\ref{theo1}, denote the distortion error if an optimal set $\ga_n$ contains $n_1$ elements from the base of the semicircular disc. Write $F(n, n_1):=V_{n_1, n-n_1+2}$. Let $\set{a(n)}$ be the sequence  defined by Definition~\ref{difi21}. Then, the algorithm runs as follows:

$(i)$ Write $n_1:=a(n)$.

$(ii)$ If $n_1=2$ go to step $(v)$, else step $(iii)$.

$(iii)$ If $F(n, n_1-1)<F(n, n_1)$ replace $n_1$ by $n_1-1$ and go to step $(ii)$, else step $(iv)$.

$(iv)$ If $F(n, n_1+1)<F(n, n_1)$ replace $n_1$ by $n_1+1$ and return, else step $(v)$.

$(v)$ End.

When the algorithm ends, then the value of $n_1$, obtained, is the exact value of $n_1$ that an optimal set $\ga_n$ contains from the base of the semicircular disc.
\begin{remark}
If $n=50$, then $a(n)=20$, and by the algorithm we also obtain $n_1=20$; if $n=80$, then $a(n)=32$, and by the algorithm we also obtain $n_1=32$. If $n=1200$, then  $a(n)=463$, and by the algorithm, we obtain $n_1=468$; if $n=2000$, then $a(n)=770$, and by the algorithm, we obtain $n_1=779$; and if $n=3000$, then $a(n)=1155$, and by the algorithm, we obtain $n_1=1168$. Thus, we see that with the help of the sequence and the algorithm, one can easily determine the exact value of $n_1$ for any positive integer $n\geq 3$.
\end{remark}

 \section{Conditional unconstrained quantization for a uniform distribution on an equilateral triangle} \label{secMe4}

Let $\tri OAB$ be an equilateral triangle with vertices $O(0,0), A(1, 0), B(\frac 12,\frac{\sqrt 3}2)$. Let $L_1, L_2, L_3$ be the sides $OA, AB$ and $BO$, respectively. Let $P$ be the uniform distribution defined on the equilateral triangle $\tri$ formed by the sides $L_1, L_2, L_3$. Let $s$ represent the distance of any point on $\tri$ from the origin tracing along the boundary of the triangle in the counterclockwise direction. Then, the points $O, A, B$ are, respectively, represented by $s=0, s=1, s=2$. 
 The probability density function (pdf) $f$ of the uniform distribution $P$ is given by $f(s):=f(x_1, x_2)=\frac 1{3}$ for all $(x_1, x_2)\in L_1\uu L_2\uu L_3$, and zero otherwise. The sides $L_1, L_2, L_3$ are represented by the parametric equations as follows:
\begin{align*}
L_1&=\set {(t, 0)},  \quad L_2=\Big\{\Big (1-\frac t 2,\frac{\sqrt{3} t}{2}\Big)\Big\}, \quad L_3=\Big\{\Big(\frac{1-t}{2}, \frac{\sqrt{3}(1-t)}{2}\Big)\Big\},
\end{align*}
where $0\leq t\leq 1$. 
Again, $dP(s)=P(ds)=f(x_1, x_2) ds$. On each $L_j$ for $1\leq j\leq 3$, we have $(ds)^2=(dx_1)^2+(dx_2)^2=(dt)^2$ yielding $ds=dt$. In the definition of conditional unconstrained quantization error, let $\gb:=\set{(0, 0), (1, 0), (\frac 12, \frac {\sqrt {3}} 2)}$. Let $L:=L_1\uu L_2\uu L_3$ which is the support of $P$. Upon the given set $\gb$, the $n$th conditional unconstrained quantization errors are defined for all $n\geq 3$.
Notice that the conditional optimal set of three-points is the set $\gb$ itself. In the sequel of this section, we investigate the conditional optimal sets of $n$-points for $n\geq 4$. 

 Consider the following two affine transformations: 
\begin{equation} \label{eqM1} 
T_1(x, y)=(-\frac 12 x+1, \frac{\sqrt 3}{2} x) \te{ and } T_2(x, y)=(-\frac 12 x+\frac 12, -\frac{\sqrt 3}2 x+\frac {\sqrt 3} 2).
\end{equation} 
%%Notice that under the above affine transformations, we have $T_1(L_1)=L_2$ and $T_2(L_1)=L_3$.   
%%Notice that the equilateral triangle has `maximum symmetry' with respect to any of its medians. By the maximum symmetry of $\tri$ with respect to any of its medians, it is meant that, if two regions of same geometrical shapes are equidistant and are on opposite sides of the median, then they have the same probability. In finding the conditional optimal sets of $n$-points, we will use this property.  
%%For any optimal set $\ga_n$ of $n$-points with $n\geq 4$, we have 
%%\[\ga_n\ii L_1=\set{(0, 0), (1, 0)}, \, \ga_n\ii L_2=\set{(1, 0), (\frac 12,\frac{\sqrt 3}2)}, \te{ and } \ga_n\ii L_3=\set{(\frac 12,\frac{\sqrt 3}2), (0, 0)}.\]

Let us now give the following proposition, which plays a vital role in this section. In this regard, one can also see \cite{RR}.

\begin{prop} \label{Me2} 
Let $\ga_n$ be a conditional optimal set of $n$-points for $P$ for some $n\geq 4$. Let $\te{card}(\ga_n\ii L_1)=n_1$, $\te{card}(\ga_n\ii L_2)=n_2$ and $\te{card}(\ga_n\ii L_3)=n_3$  with corresponding conditional unconstrained quantization error $V_n:=V_{n_1, n_2, n_3}(P)$ for some $n_1, n_2, n_3\geq 2$. Then, 
\begin{align*}
\ga_n\ii L_1& =\Big\{(\frac{j-1}{n_1-1}, 0) : 1\leq j\leq n_1\Big\},\\
 \ga_n\ii L_2&=\Big\{T_1(\frac{j-1}{n_2-1}, 0) : 1\leq j\leq n_2\Big\}, \te{ and } \\
 \ga_n\ii L_3&=\Big\{T_2(\frac{j-1}{n_3-1}, 0) : 1\leq j\leq n_3\Big\},
\end{align*}
with  
\[V_{n_1, n_2, n_3}(P)=\frac{1}{36} \Big(\frac 1{(n_1-1)^2}+\frac 1{(n_2-1)^2}+\frac 1{(n_3-1)^2}\Big).\]
\end{prop}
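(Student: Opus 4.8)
The plan is to reduce the three-sided problem to three independent one-dimensional problems of the type already solved in Proposition~\ref{Me0}, exploiting the fact that each side $L_j$ is a line segment carrying the restriction of a uniform measure, and that the three vertices $O, A, B$ — which are precisely the points of the conditional set $\gb$ — are shared pairwise between consecutive sides. First I would observe that, since $\gb = \set{(0,0),(1,0),(\frac 12,\frac{\sqrt 3}2)} \ci \ga_n$ and each vertex lies on exactly two of the sides, the counts satisfy $n_1 + n_2 + n_3 = n + 3$ (each vertex is double-counted once), which is the bookkeeping identity underlying the statement. The essential structural claim to establish is that in a \emph{conditional optimal} configuration, the Voronoi region of any point of $\ga_n$ lying on the (relative) interior of a side $L_j$ meets only that side; equivalently, no point on $L_i$ ``steals'' mass from $L_j$ for $i \neq j$. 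Granting this, the distortion integral splits as $V(P;\ga_n) = \sum_{j=1}^3 V(P; \ga_n \cap L_j)$, and on each side the sub-problem is exactly: place $n_j$ points on a segment of length $1$, including both endpoints (the two vertices), minimizing the uniform-distortion — which is resolved verbatim by Proposition~\ref{Me0} with $[c,d]$ the whole segment, $b - a = 3$ (the total length of $L$, since $P$ has density $\frac 13$ on a set of total length $3$), $d - c = 1$, and $m = n_j$. That proposition gives both the equispaced placement $\set{(\frac{j-1}{n_j-1},0) : 1 \le j \le n_j}$ on $L_1$ (and its images under $T_1$, $T_2$ on $L_2$, $L_3$, since $T_1$, $T_2$ are the isometric affine identifications of $L_1$ with $L_2$, $L_3$ sending $(t,0)$ to the point at arc-distance $t$ along the corresponding side) and the contributed error $\frac 1{12}\cdot\frac{1}{3}\cdot\frac{1}{(n_j-1)^2} = \frac{1}{36(n_j-1)^2}$; summing over $j$ yields the asserted formula.

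For the placement and the affine maps: I would check directly from the parametrizations that $T_1(t,0) = (1 - \frac t2, \frac{\sqrt 3 t}{2})$ is the point of $L_2$ at arc-length $t$ from $A$, and $T_2(t,0) = (\frac{1-t}{2}, \frac{\sqrt 3(1-t)}{2})$ is the point of $L_3$ at arc-length $t$ from $B$, so that $T_1, T_2$ carry the uniform measure on $[0,1]\times\set 0$ to the (normalized) uniform measures on $L_2$, $L_3$ and carry equispaced points to equispaced points. Then the within-side optimization along $L_2$ is isometric to the one along $L_1$ via $T_1$, and likewise for $L_3$; the optimal configuration on each side is equispaced with endpoints at the two adjacent vertices, by Proposition~\ref{Me0}. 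This makes the first and second displayed conclusions of the proposition immediate, and the error formula follows by additivity.

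The main obstacle is the additivity step, i.e., justifying that the distortion decomposes as a sum over the three sides — that in an optimal conditional configuration the nearest point of $\ga_n$ to any $x \in L_j$ already lies on $L_j$. The clean way to handle this is: (1) Note that for the \emph{fixed} configuration described in the statement (equispaced on each side), the perpendicular bisector between any interior point of $L_j$ and any point on a different side cuts $L_j$ (if at all) outside the half-segment assigned to that interior point — this is a finite check using that the vertices are common and the angles of the equilateral triangle are $\pi/3$, so a point just inside $L_i$ near a shared vertex $v$ is still farther from any $x$ on $L_j$ near $v$ than $v$ itself is, because moving off the line $L_j$ strictly increases Euclidean distance to $x \in L_j$. (2) Hence the candidate configuration has distortion $\sum_j \frac{1}{36(n_j-1)^2}$, and \emph{any} conditional configuration with side-counts $(n_1,n_2,n_3)$ has distortion at least $\sum_j V(P;\ga_n\cap L_j) \ge \sum_j \frac{1}{36(n_j-1)^2}$ because restricting the min over $\ga_n$ to the min over $\ga_n \cap L_j$ only increases the integrand on $L_j$, and on $L_j$ the latter is exactly the one-dimensional distortion of $n_j$ points on a segment, bounded below by Proposition~\ref{Me0}. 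Combining (1) and (2), the candidate is optimal among configurations with the given counts, and its points and error are as claimed. I would also remark that points of $\ga_n$ in the interior of the triangle (not on $L$) are never helpful, since $P$ is supported on $L$ and projecting such a point onto the nearest side cannot increase any distance from a point of $L$ — so we lose no generality assuming $\ga_n \ci L$.
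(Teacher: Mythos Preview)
Your overall strategy matches the paper's: reduce each side to an instance of Proposition~\ref{Me0} and transfer between sides via the affine isometries $T_1, T_2$, then sum the three one-dimensional errors. The paper's own proof is in fact terser than yours --- it invokes Proposition~\ref{Me0} directly on $L_1$, cites ``rotational symmetry'' to carry the conclusion to $L_2$ and $L_3$, and sums, without ever explicitly addressing the cross-side Voronoi interaction you correctly flag as the main obstacle.

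However, your step~(2) contains a genuine error in the direction of the key inequality. You assert that any configuration with side-counts $(n_1,n_2,n_3)$ has distortion \emph{at least} $\sum_j V(P;\ga_n\ii L_j)$, and justify this by saying that ``restricting the min over $\ga_n$ to the min over $\ga_n\ii L_j$ only increases the integrand on $L_j$.'' But increasing the integrand yields an \emph{upper} bound on the true distortion, not a lower one: your own reasoning shows $V(P;\ga_n)\le\sum_j V(P;\ga_n\ii L_j)$, the reverse of what you wrote. Concretely, a point of $\ga_n$ placed on $L_1$ near the vertex $A$ can have its Voronoi region reach into $L_2$ and thereby \emph{reduce} the $L_2$-contribution below what $\ga_n\ii L_2$ alone would give --- so the inequality you state fails in general. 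What is actually needed for the lower bound is that \emph{for the optimal} $\ga_n$ (not merely for the equispaced candidate), each non-vertex point has its Voronoi region confined to its own side; only then does additivity hold with equality, after which the one-dimensional fact that equispacing minimises segment distortion with both endpoints fixed finishes the argument. Your step~(1) establishes non-crossing only for the candidate, and step~(2) does not supply the missing direction. The paper itself glosses over this same point by appealing directly to Proposition~\ref{Me0}, so your attempt to be more careful is well motivated --- but the sandwich as written does not close.
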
 
 
 \begin{proof}
By the hypothesis,  $\te{card}(\ga_n\ii L_1)=n_1$ for some $n_1\geq 2$.
Let \begin{equation}
\ga_n\ii L_1=\set{(a_j, 0) : 1\leq j\leq n_1},
\end{equation} 
By Proposition~\ref{Me0}, we have $a_j=\frac{j-1}{n_1-1}$ for $1\leq j\leq n_1$ implying 
\[\ga_n\ii L_1=\set{(\frac{j-1}{n_1-1}, 0) : 1\leq j\leq n_1}.\] 
Given $\te{card}(\ga_n\ii L_2)=n_2$. If $\te{card}(\ga_n\ii L_1)=n_2$, then as before we have 
\[\ga_n\ii L_1=\set{(\frac{j-1}{n_2-1}, 0) : 1\leq j\leq n_2}.\] 
Hence, $T_2$ being an affine transformation such that $T_2(L_1)=L_2$, we have 
\[\ga_n\ii L_2=\set{T_1(\frac{j-1}{n_2-1}, 0) : 1\leq j\leq n_2}.\] 
Similarly, we have 
\[\ga_n\ii L_3=\set{T_2(\frac{j-1}{n_3-1}, 0) : 1\leq j\leq n_3}.\] 
To find the quantization error, we proceed as follows.  Let $V(P; \ga_n\ii L_j)$ denote the distortion errors contributed by the elements in $\ga_n\ii L_j$ for $j=1,2,3$.
Then,  \[V_{n_1, n_2, n_3}(P)=V(P; \ga_n\ii L_1)+V(P; \ga_n\ii L_2)+V(P; \ga_n\ii L_3).\]
By Proposition~\ref{Me0}, we have
\begin{align*}
V(P; \ga_n\ii L_1)=\frac{1}{36 \left(n_1-1\right){}^2}.
\end{align*} 
Due to rotational symmetry, we have 
\begin{align*}
V(P; \ga_n\ii L_2)=\frac{1}{36 \left(n_2-1\right){}^2} \te{ and } V(P; \ga_n\ii L_3)=\frac{1}{36 \left(n_3-1\right){}^2}.
\end{align*} 
Hence, 
\[V_{n_1, n_2, n_3}(P)=\frac{1}{36} \Big(\frac 1{(n_1-1)^2}+\frac 1{(n_2-1)^2}+\frac 1{(n_3-1)^2}\Big).\]
Thus, the proof of the proposition is complete. 
\end{proof} 

\begin{note} \label{note1}
Let $\ga_n(L_j)$ be the set consisting of all the elements in $\ga_n\ii L_j$ except the right endpoint for each $j=1, 2, 3$. Then, 
\begin{align*} 
\ga_n(L_1)& =\Big\{(\frac{j-1}{n_1-1}, 0) : 1\leq j\leq n_1-1\Big\},\\
\ga_n(L_2)&=\Big\{T_1(\frac{j-1}{n_2-1}, 0) : 1\leq j\leq n_2-1\Big\}, \te{ and } \\
\ga_n(L_3)&=\Big\{T_2(\frac{j-1}{n_3-1}, 0) : 1\leq j\leq n_3-1\Big\}
\end{align*}
with $\te{card}(\ga_n(L_1))=n_1-1$, $\te{card}(\ga_n(L_2))=n_2-1$, and $\te{card}(\ga_n(L_3))=n_3-1$. Notice that the sets $\ga_n(L_j)$ are disjoints and $(n_1-1)+(n_2-1)+(n_3-1)=n$. 
\end{note}

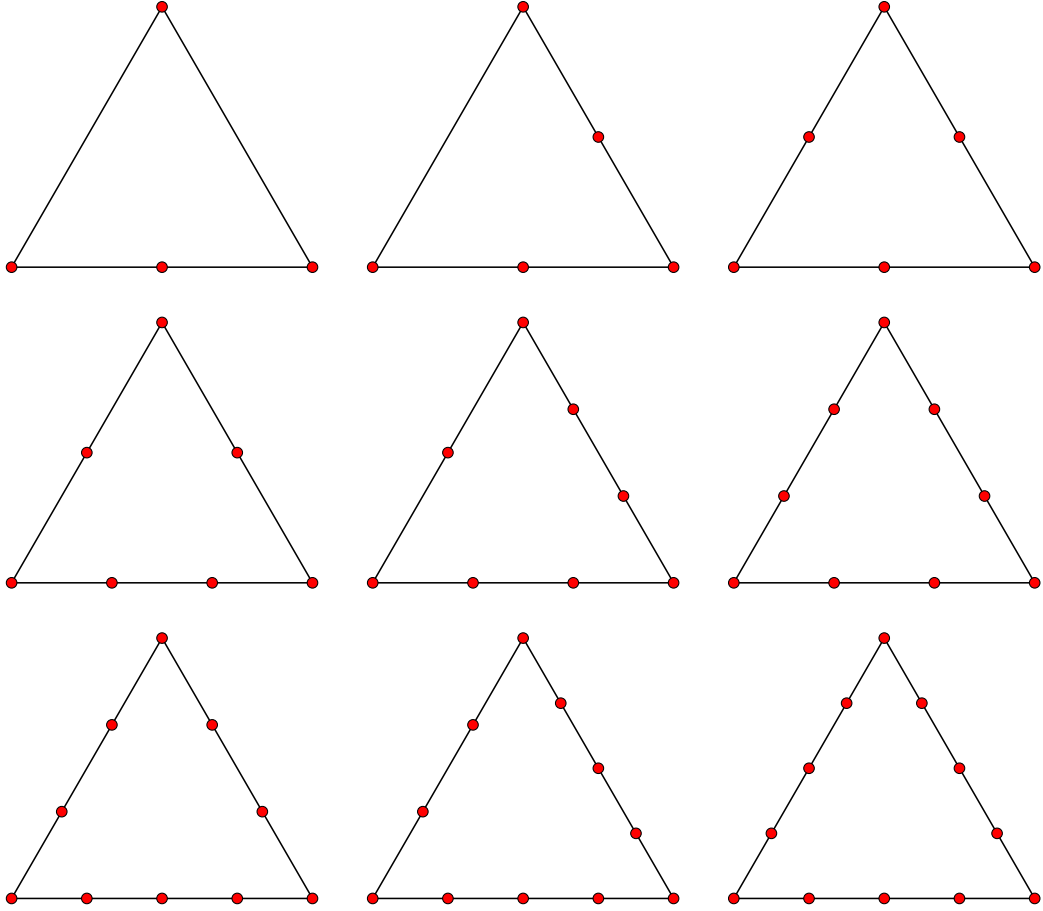
\begin{figure}
\begin{tikzpicture}[line cap=round,line join=round,>=triangle 45,x=1.0cm,y=1.0cm]
\clip(-0.1275878974887044,-0.153549597395171) rectangle (4.123200541670084,4.0114443960552855);
\draw [line width=0.59 pt] (0.,0.)-- (4.,0.);
\draw [line width=0.59 pt] (0.,0.)-- (2., 3.4641);
\draw [line width=0.59 pt] (4.,0.)-- (2., 3.4641);
\begin{scriptsize}
\draw [fill=ffqqqq] (0, 0) circle (2.0 pt);
\draw [fill=ffqqqq] (4.,0.) circle (2.0 pt);
\draw [fill=ffqqqq] (2., 3.4641) circle (2.0 pt);
\draw [fill=ffqqqq] (2.,0.) circle (2.0 pt);
\end{scriptsize}
\end{tikzpicture} 
\quad 
\begin{tikzpicture}[line cap=round,line join=round,>=triangle 45,x=1.0cm,y=1.0cm]
\clip(-0.1275878974887044,-0.153549597395171) rectangle (4.123200541670084,4.0114443960552855);
\draw [line width=0.59 pt] (0.,0.)-- (4.,0.);
\draw [line width=0.59 pt] (0.,0.)-- (2., 3.4641);
\draw [line width=0.59 pt] (4.,0.)-- (2., 3.4641);
\begin{scriptsize}
\draw [fill=ffqqqq] (0, 0) circle (2.0 pt);
\draw [fill=ffqqqq] (4.,0.) circle (2.0 pt);
\draw [fill=ffqqqq] (2., 3.4641) circle (2.0 pt);
\draw [fill=ffqqqq] (2.,0.) circle (2.0 pt);
\draw [fill=ffqqqq] (3., 1.73205) circle (2.0 pt);
\end{scriptsize}
\end{tikzpicture}  \quad 
\begin{tikzpicture}[line cap=round,line join=round,>=triangle 45,x=1.0cm,y=1.0cm]
\clip(-0.1275878974887044,-0.153549597395171) rectangle (4.123200541670084,4.0114443960552855);
\draw [line width=0.59 pt] (0.,0.)-- (4.,0.);
\draw [line width=0.59 pt] (0.,0.)-- (2., 3.4641);
\draw [line width=0.59 pt] (4.,0.)-- (2., 3.4641);
\begin{scriptsize}
\draw [fill=ffqqqq] (0, 0) circle (2.0 pt);
\draw [fill=ffqqqq] (4.,0.) circle (2.0 pt);
\draw [fill=ffqqqq] (2., 3.4641) circle (2.0 pt);
\draw [fill=ffqqqq] (2.,0.) circle (2.0 pt);
\draw [fill=ffqqqq] (3., 1.73205) circle (2.0 pt);
\draw [fill=ffqqqq] (1., 1.73205) circle (2.0 pt);
\end{scriptsize}
\end{tikzpicture} 

\begin{tikzpicture}[line cap=round,line join=round,>=triangle 45,x=1.0cm,y=1.0cm]
\clip(-0.1275878974887044,-0.153549597395171) rectangle (4.123200541670084,4.0114443960552855);
\draw [line width=0.59 pt] (0.,0.)-- (4.,0.);
\draw [line width=0.59 pt] (0.,0.)-- (2., 3.4641);
\draw [line width=0.59 pt] (4.,0.)-- (2., 3.4641);
\begin{scriptsize}
\draw [fill=ffqqqq] (0, 0) circle (2.0 pt);
\draw [fill=ffqqqq] (4.,0.) circle (2.0 pt);
\draw [fill=ffqqqq] (2., 3.4641) circle (2.0 pt);
\draw [fill=ffqqqq] (1.33333, 0) circle (2.0 pt);
\draw [fill=ffqqqq] (2.66667, 0) circle (2.0 pt);
\draw [fill=ffqqqq] (3., 1.73205) circle (2.0 pt);
\draw [fill=ffqqqq] (1., 1.73205) circle (2.0 pt);
\end{scriptsize}
\end{tikzpicture} 
\quad 
\begin{tikzpicture}[line cap=round,line join=round,>=triangle 45,x=1.0cm,y=1.0cm]
\clip(-0.1275878974887044,-0.153549597395171) rectangle (4.123200541670084,4.0114443960552855);
\draw [line width=0.59 pt] (0.,0.)-- (4.,0.);
\draw [line width=0.59 pt] (0.,0.)-- (2., 3.4641);
\draw [line width=0.59 pt] (4.,0.)-- (2., 3.4641);
\begin{scriptsize}
\draw [fill=ffqqqq] (0, 0) circle (2.0 pt);
\draw [fill=ffqqqq] (4.,0.) circle (2.0 pt);
\draw [fill=ffqqqq] (2., 3.4641) circle (2.0 pt);
\draw [fill=ffqqqq] (1.33333, 0) circle (2.0 pt);
\draw [fill=ffqqqq] (2.66667, 0) circle (2.0 pt);
\draw [fill=ffqqqq] (3.33333, 1.1547) circle (2.0 pt);
\draw [fill=ffqqqq] (2.66667, 2.3094) circle (2.0 pt);
\draw [fill=ffqqqq] (1., 1.73205) circle (2.0 pt);
\end{scriptsize}
\end{tikzpicture}  \quad 
\begin{tikzpicture}[line cap=round,line join=round,>=triangle 45,x=1.0cm,y=1.0cm]
\clip(-0.1275878974887044,-0.153549597395171) rectangle (4.123200541670084,4.0114443960552855);
\draw [line width=0.59 pt] (0.,0.)-- (4.,0.);
\draw [line width=0.59 pt] (0.,0.)-- (2., 3.4641);
\draw [line width=0.59 pt] (4.,0.)-- (2., 3.4641);
\begin{scriptsize}
\draw [fill=ffqqqq] (0, 0) circle (2.0 pt);
\draw [fill=ffqqqq] (4.,0.) circle (2.0 pt);
\draw [fill=ffqqqq] (2., 3.4641) circle (2.0 pt);
\draw [fill=ffqqqq] (1.33333, 0) circle (2.0 pt);
\draw [fill=ffqqqq] (2.66667, 0) circle (2.0 pt);
\draw [fill=ffqqqq] (3.33333, 1.1547) circle (2.0 pt);
\draw [fill=ffqqqq] (2.66667, 2.3094) circle (2.0 pt);
\draw [fill=ffqqqq] (1.33333, 2.3094) circle (2.0 pt);
\draw [fill=ffqqqq] (0.666667, 1.1547) circle (2.0 pt);
\end{scriptsize}
\end{tikzpicture} 

\begin{tikzpicture}[line cap=round,line join=round,>=triangle 45,x=1.0cm,y=1.0cm]
\clip(-0.1275878974887044,-0.153549597395171) rectangle (4.123200541670084,4.0114443960552855);
\draw [line width=0.59 pt] (0.,0.)-- (4.,0.);
\draw [line width=0.59 pt] (0.,0.)-- (2., 3.4641);
\draw [line width=0.59 pt] (4.,0.)-- (2., 3.4641);
\begin{scriptsize}
\draw [fill=ffqqqq] (0, 0) circle (2.0 pt);
\draw [fill=ffqqqq] (4.,0.) circle (2.0 pt);
\draw [fill=ffqqqq] (2., 3.4641) circle (2.0 pt);
\draw [fill=ffqqqq] (1.0, 0) circle (2.0 pt);
\draw [fill=ffqqqq] (2.0, 0) circle (2.0 pt);
\draw [fill=ffqqqq] (3.0, 0) circle (2.0 pt);
\draw [fill=ffqqqq] (3.33333, 1.1547) circle (2.0 pt);
\draw [fill=ffqqqq] (2.66667, 2.3094) circle (2.0 pt);
\draw [fill=ffqqqq] (1.33333, 2.3094) circle (2.0 pt);
\draw [fill=ffqqqq] (0.666667, 1.1547) circle (2.0 pt);
\end{scriptsize}
\end{tikzpicture} 
\quad 
\begin{tikzpicture}[line cap=round,line join=round,>=triangle 45,x=1.0cm,y=1.0cm]
\clip(-0.1275878974887044,-0.153549597395171) rectangle (4.123200541670084,4.0114443960552855);
\draw [line width=0.59 pt] (0.,0.)-- (4.,0.);
\draw [line width=0.59 pt] (0.,0.)-- (2., 3.4641);
\draw [line width=0.59 pt] (4.,0.)-- (2., 3.4641);
\begin{scriptsize}
\draw [fill=ffqqqq] (0, 0) circle (2.0 pt);
\draw [fill=ffqqqq] (4.,0.) circle (2.0 pt);
\draw [fill=ffqqqq] (2., 3.4641) circle (2.0 pt);
\draw [fill=ffqqqq] (1.0, 0) circle (2.0 pt);
\draw [fill=ffqqqq] (2.0, 0) circle (2.0 pt);
\draw [fill=ffqqqq] (3.0, 0) circle (2.0 pt);
\draw [fill=ffqqqq] (3.5, 0.866025) circle (2.0 pt);
\draw [fill=ffqqqq] (3., 1.73205) circle (2.0 pt);
\draw [fill=ffqqqq] (2.5, 2.59808) circle (2.0 pt);
\draw [fill=ffqqqq] (1.33333, 2.3094) circle (2.0 pt);
\draw [fill=ffqqqq] (0.666667, 1.1547) circle (2.0 pt);
\end{scriptsize}
\end{tikzpicture}  \quad 
\begin{tikzpicture}[line cap=round,line join=round,>=triangle 45,x=1.0cm,y=1.0cm]
\clip(-0.1275878974887044,-0.153549597395171) rectangle (4.123200541670084,4.0114443960552855);
\draw [line width=0.59 pt] (0.,0.)-- (4.,0.);
\draw [line width=0.59 pt] (0.,0.)-- (2., 3.4641);
\draw [line width=0.59 pt] (4.,0.)-- (2., 3.4641);
\begin{scriptsize}
\draw [fill=ffqqqq] (0, 0) circle (2.0 pt);
\draw [fill=ffqqqq] (4.,0.) circle (2.0 pt);
\draw [fill=ffqqqq] (2., 3.4641) circle (2.0 pt);
\draw [fill=ffqqqq] (1.0, 0) circle (2.0 pt);
\draw [fill=ffqqqq] (2.0, 0) circle (2.0 pt);
\draw [fill=ffqqqq] (3.0, 0) circle (2.0 pt);
\draw [fill=ffqqqq] (3.5, 0.866025) circle (2.0 pt);
\draw [fill=ffqqqq] (3., 1.73205) circle (2.0 pt);
\draw [fill=ffqqqq] (2.5, 2.59808) circle (2.0 pt);
\draw [fill=ffqqqq] (1.5, 2.59808) circle (2.0 pt);
\draw [fill=ffqqqq] (1., 1.73205) circle (2.0 pt);
\draw [fill=ffqqqq] (0.5, 0.866025) circle (2.0 pt);
\end{scriptsize}
\end{tikzpicture} 
 \caption{Optimal configuration of $n$-points for $4\leq n\leq 12$.} \label{Fig2}
\end{figure}

\begin{lemma} \label{lemmaMe1} 
Let $n_j\in \D N$ for $j=1, 2, 3$ be the numbers as defined in Proposition~\ref{Me2}. Then, $|n_i-n_j|=0$, or $1$ for $1\leq i\neq j\leq 3$. 
\end{lemma}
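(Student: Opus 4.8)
The plan is to turn the statement into a short convexity argument for the closed form of the error in Proposition~\ref{Me2}. Put $m_j:=n_j-1\ge 1$ for $j=1,2,3$; by Note~\ref{note1} we have $m_1+m_2+m_3=n$, and by Proposition~\ref{Me2},
\[
V_n=V_{n_1,n_2,n_3}(P)=\frac 1{36}\Big(\frac 1{m_1^2}+\frac 1{m_2^2}+\frac 1{m_3^2}\Big).
\]
First I would observe that for \emph{any} positive integers $m_1',m_2',m_3'$ with $m_1'+m_2'+m_3'=n$, the set obtained by placing $m_j'+1$ equispaced points (endpoints included) on each side $L_j$, exactly in the form dictated by Proposition~\ref{Me2}, is an admissible $n$-point set containing $\gb$, and — by the same side-by-side splitting used in the proof of Proposition~\ref{Me2}, together with Proposition~\ref{Me0} — its distortion error equals $\frac 1{36}\sum_j (m_j')^{-2}$. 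Since $\ga_n$ is a conditional optimal set, this forces $(m_1,m_2,m_3)$ to minimise $\sum_j m_j^{-2}$ over all triples of positive integers with sum $n$. Hence it suffices to prove that any such minimiser has all pairwise differences in $\{0,1\}$, which is equivalent to the assertion of the lemma.

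For this I would use a one-step exchange, arguing by contradiction. Suppose, after relabelling the (rotationally symmetric) sides, that $m_1\ge m_2+2$, so $m_1\ge 3$. Replacing the pair $(m_1,m_2)$ by $(m_1-1,m_2+1)$ preserves the sum and positivity, hence yields another admissible triple; the claim is that it strictly decreases the objective, i.e.
\[
\frac 1{(m_1-1)^2}+\frac 1{(m_2+1)^2}<\frac 1{m_1^2}+\frac 1{m_2^2}.
\]
Rearranged, this reads $\phi(m_1)<\phi(m_2+1)$ with $\phi(t):=\frac 1{(t-1)^2}-\frac 1{t^2}$ for $t\ge 2$. Since $t\mapsto t^{-2}$ is strictly convex on $(0,\infty)$, its unit backward difference $\phi(t)=f(t-1)-f(t)$ with $f(t)=t^{-2}$ is strictly decreasing in $t$ (equivalently $\phi'(t)=2\big(t^{-3}-(t-1)^{-3}\big)<0$ for $t>1$); and $m_1\ge m_2+2>m_2+1\ge 2$, so the inequality is strict. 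This contradicts minimality of $(m_1,m_2,m_3)$. The other two pairs are handled identically by symmetry, so $|m_i-m_j|\le 1$, that is $|n_i-n_j|\in\{0,1\}$, for all $1\le i\ne j\le 3$.

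The one step that is not purely formal — and where the argument leans on Section~\ref{secMe4} rather than being self-contained — is the assertion that the distortion of the auxiliary equispaced configuration genuinely decomposes as $\sum_j\frac 1{36\,(m_j')^2}$. This is precisely the geometric fact already used (implicitly) in the proof of Proposition~\ref{Me2}: for a point $x$ on a side $L_i$, its nearest configuration point again lies on $L_i$, so mass near a shared vertex is never ``stolen'' by a point on the adjacent side. I would justify it by a law-of-cosines estimate at each vertex, using the $60^{\circ}$ interior angle of the triangle: if $x$ sits at distance $\gd$ from a vertex along one side and $y$ is the nearest interior point at distance $\ep$ along the adjacent side, then $\|x-y\|^2-\gd^2=\ep(\ep-\gd)\ge 0$ whenever $\gd\le\ep$, while $x$ is assigned to the vertex only when $\gd$ is at most half the local spacing — so the decomposition survives as long as adjacent sides carry comparable numbers of points, which the hypothesis $m_1\ge m_2+2$ (with every $m_j\ge 1$) guarantees for the triples we actually exchange between. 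Beyond this check the proof is the routine convexity computation above, which I do not expect to be the obstacle.
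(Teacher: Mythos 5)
Your proof is correct and follows essentially the same route as the paper: both reduce the lemma to the fact that, at the optimum, the triple $(n_1-1,n_2-1,n_3-1)$ minimizes $\sum_j(n_j-1)^{-2}$ over positive integer triples summing to $n$, and that this forces near-equal entries; you make the paper's informal ``minimum when the two terms are nearly equal'' step precise via a one-step exchange and the strict convexity of $t\mapsto t^{-2}$, which is tidier.

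One remark worth making: the geometric verification in your last paragraph is not actually needed, so the proof is more self-contained than you fear. For the exchange argument you only need the distortion of the auxiliary equispaced configuration to be \emph{at most} $\frac{1}{36}\sum_j(m_j')^{-2}$, and that is automatic: assigning each $x\in L_j$ to the nearest of the equispaced points on $L_j$ is a (possibly suboptimal) assignment, hence the true distortion error of that configuration is bounded above by the corresponding sum. Since Proposition~\ref{Me2} already gives the exact value $V_n=\frac{1}{36}\sum_j m_j^{-2}$ for the genuinely optimal set $\ga_n$, strict inequality between the two sums contradicts optimality of $\ga_n$ directly. The law-of-cosines check would only be required if you needed exact equality for the auxiliary configuration, which you do not.
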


\begin{proof} We first show that $|n_1-n_2|=0, \te{ or } 1$. 
Write $m:=(n_1-1)+(n_2-1)$. Now, the distortion error contributed by the $m$ elements in $\ga_n(L_1)\uu \ga_n(L_2)$ is given by 
\[\frac{1}{36} \Big(\frac 1{(n_1-1)^2}+\frac 1{(n_2-1)^2}\Big).\]
The above expression is minimum if $n_1-1\approx \frac m 2$ and $n_2-1\approx \frac m 2$. Thus, we see that if $m=2k$ for some positive integer $k$, then $n_1-1=n_2-1=k$, and if $m=2k+1$ for some positive integer $k$, then either $(n_1-1=k+1$ and $n_2-1=k)$ or $(n_1-1=k$ and $n_2-1=k+1)$ which yield the fact that $|n_1-n_2|=0, \te{ or } 1$. Similarly, we can show that  $|n_i-n_j|=0$, or $1$ for any $1\leq i\neq j\leq 3$. Thus, the proof of the lemma is complete. 
\end{proof}  

Let us now give the following theorem, which gives the main result in this section. 
\begin{theo}\label{theo2}
Let $\ga_n$ be a conditional optimal set of $n$-points for $P$ for some $n\geq 4$. Then, 
\begin{align*}
\ga_n&=\Big\{(\frac{j-1}{n_1-1}, 0) : 1\leq j\leq n_1-1\Big\}\UU \Big\{T_1(\frac{j-1}{n_2-1}, 0) : 1\leq j\leq n_2-1\Big\}\\
&\qquad \qquad \UU\Big\{T_2(\frac{j-1}{n_3-1}, 0) : 1\leq j\leq n_3-1\Big\}
\end{align*} 
with the conditional unconstrained quantization error 
\[V_n=\frac{1}{36} \Big(\frac 1{(n_1-1)^2}+\frac 1{(n_2-1)^2}+\frac 1{(n_3-1)^2}\Big),\]
where $n_1, n_2, n_3$ are given as follows: For some $k\in \D N$, if $n=3k$, then $n_1-1=n_2-1=n_3-1=k$; if $n=3k+1$, then $n_1-1=k+1$ and $n_2-1=n_3-1=k$; and if $n=3k+2$, then $n_1-1=n_2-1=k+1$ and $n_3-1=k$. 
 \end{theo}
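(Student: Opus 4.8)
The plan is to combine Proposition~\ref{Me2}, which already pins down the geometry of a conditional optimal set once the triple $(n_1,n_2,n_3)$ is fixed, with Lemma~\ref{lemmaMe1}, which constrains the $n_i$ to be pairwise within $1$ of each other. So the entire content of the theorem reduces to a finite optimization: among all triples $(n_1,n_2,n_3)$ of integers $\geq 2$ with $(n_1-1)+(n_2-1)+(n_3-1)=n$ (recorded in Note~\ref{note1}) and $|n_i-n_j|\leq 1$, determine which one minimizes $F(n_1,n_2,n_3):=\frac{1}{36}\big(\frac{1}{(n_1-1)^2}+\frac{1}{(n_2-1)^2}+\frac{1}{(n_3-1)^2}\big)$. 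Writing $p_i:=n_i-1\in\D N$, we must minimize $\frac1{p_1^2}+\frac1{p_2^2}+\frac1{p_3^2}$ subject to $p_1+p_2+p_3=n$ and $|p_i-p_j|\leq 1$.

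First I would argue that among partitions of $n$ into three positive parts, the minimum of $\sum 1/p_i^2$ is attained at the balanced partition, i.e. the parts are as equal as possible. The standard way is convexity plus a smoothing argument: $x\mapsto x^{-2}$ is strictly convex on $(0,\infty)$, so if two parts differ by at least $2$, say $p_i\geq p_j+2$, replacing $(p_i,p_j)$ by $(p_i-1,p_j+1)$ strictly decreases $\frac1{p_i^2}+\frac1{p_j^2}$ (this is the discrete Karamata/majorization step — one checks $\frac1{(p_i-1)^2}+\frac1{(p_j+1)^2}<\frac1{p_i^2}+\frac1{p_j^2}$ directly, or invokes that the move makes the pair less spread out in the majorization order). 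Iterating this move terminates at the unique (up to permutation) partition with all parts within $1$ of each other, which is exactly the balanced partition. Since the labeling of the sides $L_1,L_2,L_3$ is a genuine choice, the minimizing configuration is unique only up to which side receives the larger part; the theorem's convention simply fixes that ambiguity by assigning the surplus to $L_1$ first, then $L_2$.

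Then I would just read off the three residue cases. If $n=3k$, the balanced partition is $p_1=p_2=p_3=k$, giving $n_i-1=k$ for all $i$. If $n=3k+1$, the balanced partition has one part equal to $k+1$ and two parts equal to $k$; by the stated convention $p_1=k+1$, $p_2=p_3=k$, i.e. $n_1-1=k+1$, $n_2-1=n_3-1=k$. If $n=3k+2$, the balanced partition has two parts equal to $k+1$ and one equal to $k$; the convention gives $p_1=p_2=k+1$, $p_3=k$, i.e. $n_1-1=n_2-1=k+1$, $n_3-1=k$. In each case substitute back into the formulas of Proposition~\ref{Me2} to obtain the claimed expressions for $\ga_n$ and $V_n$ (using the point sets of Note~\ref{note1}, which delete the right endpoint of each side so that the three pieces are disjoint and total $n$ points together with $\gb$).

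The only genuine obstacle — and it is minor — is making sure that Lemma~\ref{lemmaMe1} really does the job of reducing to the balanced partition without circularity: Lemma~\ref{lemmaMe1} is itself proved by the same convexity/smoothing idea applied pairwise, so strictly speaking the theorem follows from Proposition~\ref{Me2} and Lemma~\ref{lemmaMe1} almost immediately, since "$|n_i-n_j|\leq 1$ for all $i,j$ together with $\sum(n_i-1)=n$" already forces the balanced partition; the remaining work is purely the bookkeeping of the three cases $n\equiv 0,1,2\pmod 3$ and fixing the side-labeling convention. One should also note in passing that each such $\ga_n$ genuinely \emph{is} a conditional optimal set (the $n_i\geq 2$ hypothesis of Proposition~\ref{Me2} holds automatically since for $n\geq 4$ the balanced parts are $\geq 1$, i.e. $n_i\geq 2$, except one must double-check the small case $n=4$, where $p=(2,1,1)$, $n_i-1\in\{2,1,1\}$, so $n_i\in\{3,2,2\}$, all $\geq 2$, consistent with Proposition~\ref{Me2}).
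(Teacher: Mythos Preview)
Your proposal is correct and follows essentially the same route as the paper: invoke Proposition~\ref{Me2} for the structure of $\ga_n$ and the error formula, use Note~\ref{note1} for the constraint $(n_1-1)+(n_2-1)+(n_3-1)=n$, and apply Lemma~\ref{lemmaMe1} to force the balanced partition, then read off the three residue classes modulo~$3$. Your added remarks on the convexity/smoothing argument and the side-labeling convention are more explicit than the paper's terse appeal to Lemma~\ref{lemmaMe1}, but the underlying argument is the same.
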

\begin{proof}
Let $\ga_n(L_j)$ be the sets defined by Note~\ref{note1} for $j=1, 2, 3$. Notice that for $j=1, 2, 3,$ the sets $\ga_n(L_j)$ are nonempty, and so we can find three positive integers $n_j\geq 2$ as defined in Proposition~\ref{Me2} such that $\te{card}(\ga_n(L_j))=n_j-1$.  Since 
\[\ga_n=\UU_{j=1}^3\ga_n(L_j),\]
the expression for $\ga_n$ follows by Note~\ref{note1}. The expression for the conditional unconstrained quantization error $V_n$ follows from Proposition~\ref{Me2}. By Lemma~\ref{lemmaMe1}, it follows that for some $k\in \D N$, if $n=3k$, then $n_1-1=n_2-1=n_3-1=k$; if $n=3k+1$, then $n_1-1=k+1$ and $n_2-1=n_3-1=k$; and if $n=3k+2$, then $n_1-1=n_2-1=k+1$ and $n_3-1=k$.  Thus, the proof of the theorem is complete. 
\end{proof}

Using Theorem~\ref{theo2}, the following example can be obtained. 
 \begin{exam} 
A conditional unconstrained optimal set of four-points is $\set{(0, 0), (\frac 12, 1), (1, 0), (\frac 12, \frac{\sqrt 3}2)}$ with conditional unconstrained quantization error $V_4=\frac 1 {16}$; a conditional unconstrained optimal set of five-points is $\set{(0, 0), (\frac 12, 1), (1, 0), (\frac 1 4, \frac{\sqrt 3}4), (\frac 12, \frac{\sqrt 3}2)}$ with conditional unconstrained quantization error $V_5=\frac 1 {24}$; and so on (see Figure~\ref{Fig2}).
 \end{exam} 
 
\begin{theorem}\label{theo3} 
The conditional unconstrained quantization dimension $D(P)$ of the probability measure $P$ exists, and $D(P)=1$, and the $D(P)$-dimensional conditional unconstrained quantization coefficient for $P$ exists as a finite positive number and equals $\frac 34$. 
\end{theorem}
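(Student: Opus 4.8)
The plan is to read off the asymptotics of $V_n$ from the explicit formula in Theorem~\ref{theo2} and plug them into the definitions in \eqref{eq55} and the definition of the quantization coefficient. First I would compute $V_{\infty} := \lim_{n\to\infty} V_n$. Since $n_1-1, n_2-1, n_3-1$ are each comparable to $n/3$, we have $V_n = \frac{1}{36}\big(\frac{1}{(n_1-1)^2}+\frac{1}{(n_2-1)^2}+\frac{1}{(n_3-1)^2}\big) \to 0$, so $V_{\infty}=0$ and hence $V_n - V_{\infty} = V_n$. Next I would obtain a clean closed form: writing $n=3k+i$ with $i\in\{0,1,2\}$, Theorem~\ref{theo2} gives $n_1-1=\lceil n/3\rceil$-type values, and in all three cases a short computation yields that $V_n$ is within a bounded factor of $\frac{1}{36}\cdot\frac{3}{(n/3)^2} = \frac{3}{4n^2}$; more precisely I would show $n^2 V_n \to \tfrac34$ by treating the three residue classes $n=3k, 3k+1, 3k+2$ separately and taking $k\to\infty$ in each.

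For the dimension, I would substitute into $\ul D(P) = \liminf_{n\to\infty}\frac{2\log n}{-\log(V_n - V_\infty)}$ (recall $r=2$ here) and $\ol D(P)$ similarly. Since $V_n - V_\infty = V_n \sim \tfrac34 n^{-2}$, we get $-\log(V_n) = 2\log n - \log\tfrac34 + o(1)$, so $\frac{2\log n}{-\log V_n} \to 1$, giving $\ul D(P) = \ol D(P) = 1$ and therefore $D(P)=1$. For the $D(P)$-dimensional coefficient, with $\gk = D(P) = 1$ the relevant quantities are $\liminf_n n^{2/1}(V_n - V_\infty)$ and $\limsup_n n^2 V_n$; by the asymptotic $n^2 V_n \to \tfrac34$ both limits equal $\tfrac34$, so the coefficient exists and equals $\tfrac34$, a finite positive number.

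The only genuinely non-routine point is establishing that $\lim_{n\to\infty} n^2 V_n$ exists (not merely that it is bounded between positive constants), since $n_1,n_2,n_3$ jump depending on $n \bmod 3$. I would handle this by the case split above: for $n=3k$, $V_n = \frac{1}{36}\cdot\frac{3}{k^2} = \frac{1}{12k^2}$, so $n^2 V_n = \frac{9k^2}{12k^2} = \tfrac34$ exactly; for $n=3k+1$, $V_n=\frac{1}{36}\big(\frac{1}{(k+1)^2}+\frac{2}{k^2}\big)$, so $n^2 V_n = \frac{(3k+1)^2}{36}\big(\frac{1}{(k+1)^2}+\frac{2}{k^2}\big) \to \frac{9}{36}\cdot 3 = \tfrac34$ as $k\to\infty$; and similarly for $n=3k+2$, $V_n=\frac{1}{36}\big(\frac{2}{(k+1)^2}+\frac{1}{k^2}\big)$ gives $n^2 V_n \to \tfrac34$. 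Since all three subsequences converge to $\tfrac34$, the full sequence $n^2 V_n$ converges to $\tfrac34$, which simultaneously delivers the dimension $D(P)=1$ and the coefficient value $\tfrac34$. I would also remark that, per Theorem~\ref{theo1} of the final section (stated in the Delineation), this value is independent of the conditional set $\gb$, so the choice $\gb=\set{(0,0),(1,0),(\tfrac12,\tfrac{\sqrt3}{2})}$ is immaterial to the answer.
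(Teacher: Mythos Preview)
Your argument is correct and follows the same overall strategy as the paper: use the explicit formula for $V_n$ from Theorem~\ref{theo2}, observe $V_\infty=0$, and show $n^2 V_n\to\tfrac34$, from which both $D(P)=1$ and the coefficient follow. The execution differs slightly: you split into the three residue classes $n\equiv 0,1,2\pmod 3$ and compute the limit of $n^2 V_n$ directly in each case, whereas the paper only evaluates $V_n$ explicitly at multiples of $3$ (where $V_{3k}=\tfrac{1}{12k^2}$) and then sandwiches the general $V_n$ between $V_{3\ell(n)}$ and $V_{3(\ell(n)+1)}$ via the monotonicity $V_{n+1}\le V_n$. Your route avoids invoking monotonicity at the cost of two extra (routine) limit computations; the paper's route is marginally shorter but leans on the non-increasing property of the quantization errors. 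One small correction: your closing remark cites Theorem~\ref{theo1}, but that label points to the semicircle result in Section~\ref{secMe1}; the independence-of-$\gb$ statement you have in mind is the second theorem of Section~\ref{prop}.
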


\begin{proof}
For $n\in \D N$ with $n\geq 4$, let $\ell(n)$ be the unique natural number such that $3{\ell(n)}\leq n<3 ({\ell(n)+1})$. Then,
$V_{3(\ell(n)+1)}\leq V_n\leq V_{3{\ell(n)}}$, i.e., $\frac 1 {12(\ell(n)+1)^2}\leq V_n\leq  \frac 1 {12 (\ell(n))^2}$ yielding 
$\mathop{\lim}\limits_{n\to \infty} V_n=0$. Take $n$ large enough so that $ V_{3{\ell(n)}}-V_\infty<1$. Then,
\[0<-\log (V_{3{\ell(n)}}-V_\infty)\leq -\log (V_n-V_\infty) \leq -\log (V_{3({\ell(n)+1})}-V_\infty)\]
yielding
\[\frac{2 \log 3\ell(n)}{-\log (V_{3({\ell(n)+1})}-V_\infty)}\leq \frac{2\log n}{-\log (V_n-V_\infty)}\leq \frac{2 \log 3(\ell(n)+1)}{-\log (V_{6{\ell(n)}}-V_\infty)}.\]
 Notice that
\begin{align*}
\lim_{n\to \infty}\frac{2 \log 3\ell(n)}{-\log (V_{3({\ell(n)+1})}-V_\infty)}&=\lim_{n\to \infty} \frac{2 \log 3\ell(n)}{-\log \frac 1{12(\ell(n)+1)^2}}=1, \te{ and }\\
\lim_{n\to \infty} \frac{2 \log 3(\ell(n)+1)}{-\log V_{3{\ell(n)}}}&=\lim_{n\to \infty} \frac{2 \log 3(\ell(n)+1)}{-\log \frac 1{12(\ell(n))^2}}=1.
\end{align*}
 Hence, by the squeeze theorem, $\lim_{n\to \infty}  \frac{2\log n}{-\log (V_n-V_\infty)}=1$, i.e., the conditional unconstrained quantization dimension $D(P)$ of the probability measure $P$ exists and $D(P)=1$.
 Since
\begin{align*}
&\lim_{n\to \infty} n^2 (V_n-V_\infty)\geq \lim_{n\to \infty} (3\ell(n))^2 (V_{3(\ell(n)+1)}-V_\infty)=\lim_{n\to\infty}(3\ell(n))^2\frac 1{12(\ell(n)+1)^2}=\frac 34, \te{ and } \\
&\lim_{n\to \infty} n^2 (V_n-V_\infty)\leq \lim_{n\to \infty} (3(\ell(n)+1))^2 (V_{3\ell(n)}-V_\infty)=\lim_{n\to\infty}(3(\ell(n)+1))^2\frac 1{12(\ell(n))^2}=\frac 34,
\end{align*}
by the squeeze theorem, we have $\lim_{n\to \infty} n^2 (V_n-V_\infty)=\frac 34$, i.e., the $D(P)$-dimensional conditional unconstrained quantization coefficient for $P$ exists as a finite positive number and equals $\frac 34$.  Thus, the proof of the theorem is complete. 
\end{proof} 

\section{Observations} \label{secob}
Notice that the conditional optimal sets of $n$-points have two classifications: the conditional constrained optimal sets of $n$-points and the conditional unconstrained optimal sets of $n$-points; the corresponding quantization errors are, respectively, called the $n$th conditional constrained quantization error and the $n$th conditional unconstrained quantization error. 
%To distinguish, let $V_{c, n}(P)$ denote the $n$th conditional quantization error for $n$-points, and $V_{n}(P)$ denote the $n$th unconstrained quantization error for $n$-means for a Borel probability measure $P$. Also, to distinguish we denote the conditional quantization dimension by $D_c(P)$ and the unconstrained quantization dimension by $D(P)$, respectively. 
\medskip 

In the following remarks, we give some observations. 

\begin{remark} \label{REM1} 
In constrained quantization it is known that elements in an optimal set of $n$-points are not necessarily the means, i.e., not necessarily the conditional expectations in their own Voronoi regions (see \cite{PR1}). Obviously, this fact is also true in conditional constrained quantization. In conditional unconstrained quantization, let $\ga\uu\gb$ be a conditional unconstrained optimal set of $n$-points for a probability distribution $P$ for some $n>\te{card}(\gb)$. Then, the distortion error contributed by an element $c\in \ga\setminus\gb$ is given by 
\[\int_{M(c|\ga\uu \gb)}\rho(x, c) \,dP(x),\]
which is minimum if $c=E(X : X\in M(b|\ga\uu \gb))$, i.e., in conditional unconstrained quantization, the elements in an optimal set of $n$-points which are not in the conditional set, are the means, i.e., the conditional expectations in their own Voronoi regions. 
\end{remark} 

\begin{remark}
Let $P$ be a uniform distribution on the closed interval $[0, 1]$. Let $V_n$ denote the $n$th unconstrained quantization error, and let $V_{c, n}$ denote the $n$th conditional unconstrained quantization error with respect to the conditional set $\gb:=\set{0}$ for the uniform distribution $P$. Then, by Proposition~\ref{Me030}, we know that the conditional optimal sets of $n$-points for all $n\geq 1$ are given by 
\[\Big\{\frac{2(j-1)}{2n-1} : 1\leq j\leq n\Big\} \te{ with } V_{c, n}(P)=\frac 1{3(2n-1)^2}.\]
On the other hand, the optimal sets of $n$-means are given by 
\[\Big\{\frac{2j-1}{2n} : 1\leq j\leq n\Big\} \te{ with } V_{n}(P)=\frac 1{12 n^2}.\]
Notice that for all $n\geq 1$, the conditional optimal sets of $n$-points exist with 
\[V_{c, n}=\frac 1{3(2n-1)^2}=\frac 1{3( n+n-1)^2}>\frac 1{3( n+n)^2}=\frac 1{12 n^2}=V_n.\]
But, the conditional quantization coefficients in both the scenarios exist as a finite positive number and equal to $\frac 1{12}$. The conditional quantization dimensions in both the scenarios also exist and equal to one. 
\end{remark} 

\begin{remark} Let $P$ be a Borel probability measure the support of which contains at least $n$ elements for some positive integer $n$. Then, the followings are true: $(i)$ In constrained or unconstrained quantization an optimal set containing exactly one element always exists; $(ii)$ in unconstrained quantization an optimal set of $n$-means containing exactly $n$ elements always exists; and $(iii)$ in constrained quantization, though the support contains at least $n$ elements or infinitely many elements, an optimal set containing more than one element may not exist, for example, one can take $P$ as a uniform distribution with support the closed interval $[a, b] $ for some $a, b\in \D R$ with $a<b$, and the constraint as the vertical line passing through the point $(\frac 12(a+b), 0)$. For more examples, see \cite{PR1}.  In constrained (or unconstrained) scenario, let $k$ be the largest positive integer for which an optimal set of $k$-points (or $k$-means) contains exactly $k$ elements, then there is no conditional optimal set containing $n$ elements for any $n\geq (k+1)$. 
\end{remark} 

\begin{remark}
Optimal sets of $n$-points for all positive integers can exist, but a conditional optimal set may not exist for all $n$. In this regard, in unconstrained scenario, we give the following example:
Let $P$ be a uniform distribution on the closed interval $[0, 1]$.  Let $\gb:=\set{(0, \frac 1{100})}$ be the conditional set. Let $\ga:=\set{(0, \frac 1{100})}\uu \set{(t_j, 0) : 1\leq j\leq n}$ be a conditional optimal set of $(n+1)$-points such that $t_1<t_2<\cdots<t_n$. Let the boundary of the Voronoi regions of $(0, \frac 1{100})$ and $(t_1, 0)$ intersect the support of $P$ at the point $(d, 0)$. Clearly $0\leq d<\frac  {t_1}2$.
Then, by \cite[Theorem~2.1.1]{RR}, we have $t_j=d+\frac {(2j-1)(1-d)}{2n}$ for $j=1, 2, \cdots n$.  
Hence, the distortion error is given by 
\begin{align*}
V(P;\ga)&=\te{distortion error contributed by } (0, \frac 1{100}) +\te{distortion error contributed by all } t_j\\
&=\int_0^d \rho((t, 0), (0, \frac 1{100}))\,dt+\frac{(1-d)^3}{12 n^2}\\
&=\frac{d^3}{3}+\frac{d}{10000}+\frac{(1-d)^3}{12 n^2}
\end{align*}
the minimum value of which is 
\begin{equation*} \label{eqMe0001} \frac{n \left(4 n \left(n \sqrt{10001-4 n^2}+249925\right)-10001 \sqrt{10001-4 n^2}\right)+250075}{750000 \left(1-4 n^2\right)^2}
\end{equation*}
and it occurs when $d=\frac{50-n \sqrt{10001-4 n^2}}{50-200 n^2}$. 
Notice that $d$ is a decreasing sequence of real numbers for $n=1, 2, \cdots, 50$, and becomes imaginary if $n\geq 51$. In fact, we see that $d>0$ if $1\leq n<50$, and $d=0$ if $n=50$.   
Hence, we can say that the conditional optimal sets of $n$-points exist for $1\leq n\leq 49$, and it does not exist if $n\geq 50$. But, notice that the optimal sets of $n$-points (indeed $n$-means) for all $n\geq 50$ still exist which are given by $\set{(\frac{2j-1}{2n}, 0) : 1\leq j\leq n}$ with quantization error $\frac 1{12n^2}$. 
\end{remark}

\begin{remark}
In Section~\ref{secMe4}, we have seen that for a uniform distribution defined on the equilateral triangle,  the conditional unconstrained quantization dimension is one, and the conditional unconstrained quantization coefficient is $\frac 34$. These values are the same as the unconstrained quantization dimension and the unconstrained quantization coefficient for the uniform distribution on the equilateral triangle (see \cite{RR}).  In the next section, we give some general proofs to show whether the lower and upper quantization dimensions, and the lower and upper quantization coefficients for a Borel probability measure depend on the underlying conditional set. 
\end{remark} 

\section{Some important properties} \label{prop}
In this section, first we show that in unconstrained quantization, the union of all optimal sets of \( n \)-means is dense in the support of \( P \). Second, we prove that in conditional constrained quantization, the lower and upper quantization dimensions and coefficients are independent of the choice of the conditional set provided that the conditional set is contained in the union of the family of constraints. If the conditional set is not contained in the union of the constraint family, the facts may not be true, as demonstrated by Example~\ref{exam1} and Example~\ref{exam2}.
\begin{theorem} \label{TheoM1} 
Let $P$ be a Borel probability measure on $\D R^k$. Let $\ga_n$ be an optimal set of $n$-means for $P$ for all $n\in \D N$. Then, $\mathop{\uu}\limits_{n=1}^\infty \ga_n$ is dense in the support of $P$. 
\end{theorem}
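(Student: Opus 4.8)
The plan is to argue by contradiction: suppose $\UU_{n=1}^\infty \ga_n$ is not dense in $\operatorname{supp}(P)$. Then there is a point $x_0 \in \operatorname{supp}(P)$ and a radius $\rho>0$ such that the open ball $B(x_0,\rho)$ meets $\operatorname{supp}(P)$ but contains no point of any optimal set $\ga_n$. Since $x_0$ lies in the support, $P(B(x_0,\rho/2))>0$; call this positive number $p$. The strategy is to show that for $n$ large this forces the $n$th unconstrained quantization error $V_n(P)$ to stay bounded below by a fixed positive constant, contradicting the standard fact that $V_n(P)\to 0$ as $n\to\infty$ (which holds because $\int d(x,0)^r\,dP(x)<\infty$ and one can approximate $P$ arbitrarily well in $r$th mean by finitely supported measures).

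First I would make the lower bound precise. For any $x\in B(x_0,\rho/2)$ and any $a\in\ga_n$, since $a\notin B(x_0,\rho)$ we have $d(x,a)\geq \rho/2$, hence $\min_{a\in\ga_n} d(x,a)^r \geq (\rho/2)^r$ on the set $B(x_0,\rho/2)$. Therefore
\[
V_n(P) = \int \min_{a\in\ga_n} d(x,a)^r\, dP(x) \geq \int_{B(x_0,\rho/2)} \Big(\tfrac{\rho}{2}\Big)^r dP(x) = \Big(\tfrac{\rho}{2}\Big)^r p > 0
\]
for every $n$. On the other hand, to see $V_n(P)\to 0$: given $\varepsilon>0$ choose a finite set $F$ with $\int \min_{a\in F} d(x,a)^r\,dP(x)<\varepsilon$ (possible by density of finitely supported measures in the $L^r$ sense, using the integrability hypothesis), and then for $n\geq \operatorname{card}(F)$ the optimality of $\ga_n$ gives $V_n(P)\leq \int\min_{a\in F} d(x,a)^r\,dP(x)<\varepsilon$. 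Combining the two displays with $\varepsilon < (\rho/2)^r p$ yields the contradiction.

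A subtlety worth addressing carefully is that the statement is about \emph{optimal} sets of $n$-means, which must be argued to exist so that $\ga_n$ makes sense; but this is part of the standing setup (the unconstrained case of the framework in Section~\ref{secPre}), so it may be invoked. The one genuinely load-bearing step is the claim $V_n(P)\to 0$, i.e.\ that finitely supported measures approximate $P$ arbitrarily well in $r$th mean; I would justify it by taking a large compact ball $K$ with $\int_{K^c} d(x,0)^r\,dP(x)$ small (using $\int d(x,0)^r\,dP<\infty$), covering $K\cap\operatorname{supp}(P)$ by finitely many small balls, and picking one center from each — a routine but essential estimate. I expect the main obstacle to be purely expository: stating the approximation step at the right level of generality for an arbitrary Borel probability measure on a normed space, rather than any real difficulty. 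Everything else is a direct contradiction argument.
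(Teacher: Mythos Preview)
Your proposal is correct and follows essentially the same contradiction argument as the paper: assume some ball around a support point misses every $\ga_n$, deduce a uniform positive lower bound on $V_{n,r}(P)$, and contradict $V_{n,r}(P)\to 0$. Your version is in fact slightly more careful than the paper's, since you shrink to the half-radius ball $B(x_0,\rho/2)$ to make the triangle-inequality estimate $d(x,a)\geq \rho/2$ rigorous, and you also sketch why $V_{n,r}(P)\to 0$, which the paper simply asserts.
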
 
\begin{proof}
Let $x\in \text{Supp}(P)$. Our aim is to show that for each $\epsilon>0$,  $B(x,\epsilon)\ii\mathop{\UU}\limits_{n=1}^{\infty}\ga_n\ne \emptyset.$
We prove it by contradiction. Let there exists an $\epsilon>0$ such that $B(x,\epsilon)\ii \mathop{\UU}\limits_{n=1}^{\infty}\ga_n=\emptyset.$ Then, 
\begin{align*}
	V_{n,r}(P)=\int d(x,\ga_n)^{r} dP(x)\geq \epsilon^{r} P(B(x,\epsilon)\cap \text{Supp}(P)).
	\end{align*}
We claim that $P(B(x,\epsilon)\cap \text{Supp}(P))>0$. Assume that $P(B(x,\epsilon)\cap \text{Supp}(P))=0$.  This implies that $x\notin \text{Supp}(P)$, which is a contradiction. Thus  $P(B(x,\epsilon)\cap \text{Supp}(P))>0$.  Therefore, we get 
$$\lim_{n\to \infty}V_{n,r}(P)\geq \epsilon^{r} P(B(x,\epsilon)\cap \text{Supp}(P))>0,$$
which contradicts the fact that $\lim_{n\to \infty}V_{n,r}(P)=0$. This implies that for  each $\epsilon>0$,  we have $B(x,\epsilon)\cap \mathop {\UU}\limits_{n=1}^{\infty}\ga_n\ne \emptyset.$ Thus, $\mathop{\UU}\limits_{n=1}^\infty \ga_n$ is dense in the support of $P$. This completes the proof.
\end{proof}

\begin{theorem} \label{TheoM2} 
Let $P$ be a Borel probability measure on $\D R^k$. In conditional constrained quantization, let the conditional set is contained in the union of the family of constraints. Then, in the conditional constrained quantization, the lower and upper quantization dimensions, and the lower and upper quantization coefficients for a Borel probability measure do not depend on the conditional set. 
\end{theorem}

\begin{proof} 
In conditional constrained quantization, let $\set{S_j\ci \D R^k: j\in \D N}$ be the family of constraints, and let the set $\gb$ be the conditional set with $\te{card}(\gb)=\ell$ for some $\ell\in \D N$ such that the following condition is true: 
\begin{equation*}
\gb \ci \UU_{j\in \D N} S_j.
\end{equation*}
Let $q$ be the least positive integer such that $\gb\ci \mathop{\uu}\limits_{j=1}^q S_j$.  
Let  $ V_{n,r}(P)$ and  $V_{c,n,r}(P)$ denote  the $n$th constrained  and the $n$th conditional constrained quantization  errors, respectively. 
We now prove the following claim.

\tit{Claim.  Let $n> \max \set{q, \ell}$. Then, 
$V_{n,r}(P)\leq V_{c, n,r}(P)\leq V_{n-\ell,r}(P).$}

The proof of $V_{n,r}(P)\leq V_{c, n,r}(P)$ is trivial. We now give the proof of $V_{c, n,r}(P)\leq V_{n-\ell,r}(P)$. 
Take any $\gg\ci \mathop{\uu}\limits_{j=1}^q S_j$ with $1\leq \te{card}(\gg)\leq n-\ell$. Then, for any $x\in \D R^k$, we have 
\[\min_{a\in \gg\uu \gb}d(x, a)^r \leq \min_{a\in \gg} d(x, a)^r.\]
Hence, 
\begin{align*} 
V_{c, n, r}(P)&=\inf_{\ga} \Big\{\int \mathop{\min}\limits_{a\in\ga\uu\gb} d(x, a)^r dP(x) : \ga \ci \mathop{\uu}\limits_{j=1}^q S_j, ~ 1\leq  \text{card}(\ga) \leq n-\ell \Big\}\\
&\leq \int \mathop{\min}\limits_{a\in\gg\uu\gb} d(x, a)^r dP(x)\\
&\leq \int \mathop{\min}\limits_{a\in\gg} d(x, a)^r dP(x),
\end{align*} 
which yields the fact that 
\[V_{c, n, r}(P)\leq \inf_{\gg} \Big\{\int \mathop{\min}\limits_{a\in\gg } d(x, a)^r dP(x) : \gg\ci \mathop{\uu}\limits_{j=1}^q S_j, ~ 1\leq  \text{card}(\gg) \leq n-\ell \Big\}=V_{n-\ell, r}(P).\]
Thus, the proof of the claim is complete. 

By the claim, using the squeeze theorem, we have $V_{\infty, r}(P)=V_{c, \infty, r}(P)$. Hence, 
\begin{align}\label{eq8}
	V_{n,r}(P)-V_{\infty, r}(P)\leq V_{c, n,r}(P)-V_{c, \infty, r}(P) \leq V_{n-\ell,r}(P)-V_{\infty, r}(P).
\end{align}
By the inequalities in \eqref{eq8}, and applying the squeeze theorem, we conclude that the lower and upper quantization coefficients are identical in both the constrained and the conditionally constrained cases. 
 Choose $n\in \mathbb{N}$ large enough such that $V_{n-\ell,r}(P)-V_{\infty, r}(P)<1$. Then, by the inequalities in \eqref{eq8}, we get  
 $$\frac{r\log n}{-\log(V_{n,r}(P)-V_{\infty, r}(P))}\leq \frac{r\log n}{-\log(V_{c, n,r}(P)-V_{c, \infty, r}(P))}\leq \frac{r\log n}{-\log(V_{n-\ell,r}(P)-V_{\infty, r}(P))}.$$
Hence, by the squeeze theorem, it follows that the lower and upper quantization dimensions are equal in both the constrained and the conditionally constrained settings. This completes the proof of the theorem.
\end{proof} 

As unconstrained quantization is a special case of constrained quantization, Theorem~\ref{TheoM2} leads to the following corollary. 
\begin{cor}
Let $P$ be a Borel probability measure on $\D R^k$. In conditional unconstrained quantization, the lower and upper quantization dimensions, and the lower and upper quantization coefficients for a Borel probability measure do not depend on the conditional set. 
\end{cor}

\begin{remark} \label{remM345} 
In the conditional constrained quantization, if the conditional set is not contained in the union of the family of constraints, then the lower and upper quantization dimensions, and the lower and upper quantization coefficients for a Borel probability measure may depend on the conditional set, indeed they may not exist depending on the conditional set,  as illustrated in the following two examples.
\end{remark}
The following example shows that the conditional constrained quantization dimension coincides with the constrained quantization dimension, but the corresponding quantization coefficients differ. 
\begin{exam} \label{exam1} 
%\textcolor{red}{To prove Proposition~\ref{propMeM11},}  \textcolor{blue}{To prove Remark~\ref{remM345},}
 %it is enough to give an example for which the \textcolor{red} {proposition} \textcolor{blue}{remark}  is true. 
 Let $P$ be a uniform probability measure on the closed interval $[0, 1]$. In a constrained quantization for $P$, let $S:=\set{(x, \frac 14 x+\frac 1 4) : x\in \D R}$ be a single constraint. If there is no conditional set, then comparing with Theorem~\ref{sec2Theorem1}, we see that 
$a=0, \, b=1, \, m=\frac 1 4$, and $c=\frac 14$. Hence, with respect to the squared Euclidean distance, the set $\{(a_i, \frac 14 a_i+\frac 14): 1\leq i \leq n\}$ forms a constrained optimal set of $n$-points, where $a_i=-\frac{-16 i+n+8}{17 n}$ for $1\leq i\leq n$ for any $n\in \D N$. If $n\geq 3$, then by Theorem~\ref{sec2Theorem1}, the constrained quantization error is given by 
\[V_n=\frac{3 n^3+18 n^2-10 n-12}{51 n^3} \te{ implying } V_\infty=\frac{1}{17}.\]
Hence, the constrained quantization dimension and the constrained quantization coefficient are obtained as 
\begin{equation} \label{Megha12} 
\lim_{n\to \infty}\frac{2\log n}{-\log (V_n-V_\infty)}=2 \te{ and } \lim_{n\to\infty} n(V_n-V_\infty)=\frac{6}{17}.
\end{equation} 
We now calculate the conditional constrained optimal sets of $n$-points and the $n$th conditional constrained quantization errors taking the conditional set as $\gb:=\set{(0, 0)}$ and the set $S=\set{(x, \frac 14 x+\frac 1 4) : x\in \D R}$ as a single constraint. For any $n\geq 1$, if $\ga_{n+1}$ is a conditional constrained optimal set of $(n+1)$-points, then with respect to $\ga_{n+1}$ we see that the Voronoi region of $(0, 0)$ has always positive probability. Let the boundary of the Voronoi region of $(0,0)$ intersect the support of $P$ at the element $(d, 0)$ for some $d>0$. Thus, putting $a=d, \, b=1, \, m=\frac 1 4$, and $c=\frac 14$ by Theorem~\ref{sec2Theorem1},  $\ga_{n+1}$ can be represented by
\[\ga_{n+1}= \{(0, 0)\}\UU \Big\{(a_i, \frac 14 a_i+\frac 14) : 1\leq i\leq n\Big\} \te{ where } a_i=-\frac{8 d (2 i-2 n-1)-16 i+n+8}{17 n}.\]
Notice that in the optimal set $\ga_{n+1}$, the adjacent element to $(0,0)$ is the element $(a_1, \frac 14 a_1+\frac 14)$, where $a_1=\frac{16 d n-8 d-n+8}{17 n}$, and the point $(d,0)$ is on the common boundary of their Voronoi regions. Hence, solving the canonical equation: 
\[\rho((0, 0), (d, 0))-\rho((d, 0), (a_1, \frac 14 a_1+\frac 14))=0,\]
we have $d=\frac{n^2+n\sqrt{17 n^2+52} -4}{16 n^2-4}$. Again, recall Theorem~\ref{sec2Theorem1}. If $V_{n+1}$ is the $(n+1)$th conditional constrained quantization error, where $n\geq 3$, then 
\begin{align*}
V_{n+1}&=\te{disotortion error due to the element } (0, 0) \\
&\qquad \qquad + \te{disotortion error due to the elements } (a_i, \frac 14 a_i+\frac 14) \te{ for } 1\leq i\leq n\\
&=\frac{d^3}{3}+\frac{d^2 \left(3 n^3-12 n^2+26 n-12\right)+2 d \left(3 n^3-3 n^2-8 n+12\right)+3 n^3+18 n^2-10 n-12}{51 n^3}. 
\end{align*} 
Next substituting $d=\frac{n^2+\sqrt{17 n^2+52} n-4}{16 n^2-4}$ in the above expression for $V_{n+1}$, we see that $V_{n+1}$ is a function of $n$, and then obtain 
\begin{align*}
& V_{\infty}=\lim_{n\to \infty} V_{n+1} =\frac{29 \sqrt{17}+229}{3072}.
\end{align*}
Moreover, we obtain
\begin{align} \label{Megha13}  
\lim_{n\to \infty}\frac{2\log n}{-\log (V_n-V_\infty)}=2 \te{ and } \lim_{n\to\infty} n(V_n-V_\infty)=\frac{1}{544} \left(179-5 \sqrt{17}\right).
\end{align} 
Comparing \eqref{Megha12} and \eqref{Megha13}, in this example, we see that the conditional constrained quantization dimension is same as the constrained quantization dimension, but the conditional constrained quantization coefficient and the constrained quantization coefficient are different.
 \end{exam} 
The following example demonstrates that neither the conditional constrained quantization dimension nor the conditional constrained quantization coefficient exist. However, both exist when the conditional set is removed.
\begin{exam} \label{exam2} 
%\textcolor{red}{To prove Proposition~\ref{propMeM11},}  \textcolor{blue}{To prove Remark~\ref{remM345},}
 %it is enough to give an example for which the \textcolor{red} {proposition} \textcolor{blue}{remark}  is true. 
 Let $P$ be a uniform probability measure on the closed interval $[0, 1]$. In a constrained quantization for $P$, let $S:=\set{(x, x+4) : x\in \D R}$ be a single constraint. If there is no conditional set, then comparing with Theorem~\ref{sec2Theorem1}, we see that 
$a=0, \, b=1, \, m=1$, and $c=4$. Hence, with respect to the squared Euclidean distance, the set $\{(a_i,   a_i+4): 1\leq i \leq n\}$ forms a constrained optimal set of $n$-points, where $a_i=\frac{2 i-1}{4 n}-2$ for $1\leq i\leq n$ for any $n\in \D N$. If $n\geq 3$, then by Theorem~\ref{sec2Theorem1}, the constrained quantization error is given by 
\[V_n=\frac{192 n^3+252 n^2-271 n-48}{24 n^3} \te{ implying } V_\infty=8.\]
Hence, the constrained quantization dimension and the constrained quantization coefficient are obtained as 
\begin{equation}
\lim_{n\to \infty}\frac{2\log n}{-\log (V_n-V_\infty)}=2 \te{ and } \lim_{n\to\infty} n(V_n-V_\infty)=\frac{21}{2}.
\end{equation} 
We now investigate the conditional constrained optimal sets of $n$-points and the $n$th conditional constrained quantization errors taking the conditional set as $\gb:=\set{(0, 0)}$ and the set $S=\set{(x, x+4) : x\in \D R}$ as a single constraint. Notice that for any $x\in \te{supp}(P)$, we have
\[d(x, (0, 0))<\min \set{d(x, a) : a \in S}.\]
Hence, the conditional constrained optimal sets of 
$n$-points, containing exactly $n$ elements, do not exist for any $n\geq 2$. In other words, a decreasing sequence $V_n$ of conditional constrained quantization errors can not be obtained in this scenario. Consequently, neither the conditional constrained quantization dimension nor the conditional constrained quantization coefficient exists, although both exist when the conditional set is removed. 
 \end{exam}


\begin{thebibliography}{9999}
%\bibitem[AW]{AW} E.F. Abaya and G.L. Wise, \emph{Some remarks on the existence of optimal quantizers}, Statistics \& Probability Letters, Volume 2, Issue 6, December 1984, Pages 349-351.
%\bibitem[BW]{BW} J.A. Bucklew and G.L. Wise, \emph{Multidimensional asymptotic quantization theory with $r$th power distortion measures}, IEEE Transactions on Information Theory, 1982, Vol. 28 Issue 2, 239-247.

%\bibitem[CR]{CR}  D. \c C\"omez and M.K. Roychowdhury, \emph{Quantization for uniform distributions on stretched Sierpi\'nski triangles}, Monatshefte f\"ur Mathematik, Volume 190, Issue 1, 79-100 (2019).

  \bibitem[DFG]{DFG} Q. Du, V. Faber and M. Gunzburger, \emph{Centroidal Voronoi Tessellations: Applications and Algorithms}, SIAM Review, Vol. 41, No. 4 (1999), pp. 637-676.

%\bibitem[DR]{DR} C.P. Dettmann and M.K. Roychowdhury, \emph{An algorithm to compute CVTs for finitely generated Cantor distributions}, Southeast Asian Bulletin of Mathematics (2021) 45: 173-188.
\bibitem[DR]{DR} C.P. Dettmann and M.K. Roychowdhury, \emph{Quantization for uniform distributions on equilateral triangles}, Real Analysis Exchange, Vol. 42(1), 2017, pp. 149-166.

\bibitem[GG]{GG} A. Gersho and R.M. Gray, \emph{Vector quantization and signal compression}, Kluwer Academy publishers: Boston, 1992.

%\bibitem[GKL]{GKL}  R.M. Gray, J.C. Kieffer and Y. Linde, \emph{Locally optimal block quantizer design}, Information and Control, 45 (1980), pp. 178-198.

\bibitem[GL]{GL} S. Graf and H. Luschgy, \emph{Foundations of quantization for probability distributions}, Lecture Notes in Mathematics 1730, Springer, Berlin, 2000.


\bibitem[GL1]{GL1} A. Gy\"orgy and T. Linder, \emph{On the structure of optimal entropy-constrained scalar quantizers},  IEEE transactions on information theory, vol. 48, no. 2, February 2002.

\bibitem [GL2]{GL2} S. Graf and H. Luschgy, \emph{The Quantization of the Cantor Distribution}, Math. Nachr., 183 (1997), 113-133.


\bibitem[GL3]{GL3} S. Graf and H. Luschgy, \emph{Quantization for probability measures with respect to the geometric mean error}, Math. Proc. Camb. Phil. Soc. (2004), 136, 687-717. 



  \bibitem[GN]{GN} R.M. Gray and D.L. Neuhoff, \emph{Quantization}, IEEE Transactions on Information Theory, Vol. 44, No. 6, October 1998, 2325-2383.

%
% \bibitem[H]{H} J. Hutchinson, \emph{Fractals and self-similarity}, Indiana Univ. J., 30 (1981), 713-747.


\bibitem[KNZ]{KNZ}  M. Kesseb\"ohmer, A. Niemann and S. Zhu, \emph{Quantization dimensions of compactly supported probability measures via R\'enyi dimensions}, Trans. Amer. Math. Soc. (2023). 

\bibitem[P]{P} D. Pollard, \emph{Quantization and the Method of $k$-Means}, IEEE Transactions on Information Theory, 28 (1982), 199-205.
  \bibitem[P1]{P1} K. P\"otzelberger, \emph{The quantization dimension of distributions}, Math. Proc. Cambridge Philos. Soc., 131 (2001), 507-519.

\bibitem[PR1]{PR1} M. Pandey and M.K. Roychowdhury, \emph{Constrained quantization for probability distributions}, to appear, J. Fractal Geom., 2025. 
\bibitem[PR2]{PR2} M. Pandey and M.K. Roychowdhury, \emph{Constrained quantization for the Cantor distribution}, J. Fractal Geom. 11 (2024), no. 3/4, pp. 319-341.
\bibitem[PR3]{PR3} M. Pandey and M.K. Roychowdhury, \emph{Constrained quantization for a uniform distribution with respect to a family of constraints}, arXiv:2306.16653 [math.PR].


\bibitem[PRRSS]{PRRSS} G. Pena, H. Rodrigo, M.K. Roychowdhury, J. Sifuentes, and E. Suazo, \emph{Quantization for uniform distributions on hexagonal, semicircular, and elliptical curves},  Journal of Optimization Theory and Applications,  (2021) 188: 113-142.
\bibitem[RR]{RR} J. Rosenblatt and M.K. Roychowdhury, \emph{Uniform distributions on curves and quantization}, Commun. Korean Math. Soc. 38 (2023), No. 2, pp. 431-450.

%\bibitem[R]{R} L. Roychowdhury, \emph{Optimal quantization for nonuniform Cantor distributions}, Journal of Interdisciplinary Mathematics, Vol 22 (2019), pp. 1325-1348.

\bibitem[R1]{R1} M.K. Roychowdhury, \emph{Quantization and centroidal Voronoi tessellations for probability measures on dyadic Cantor sets}, Journal of Fractal Geometry, 4 (2017), 127-146.

\bibitem [R2]{R2} M.K. Roychowdhury, \emph{Least upper bound of the exact formula for optimal quantization of some uniform Cantor distributions}, Discrete and Continuous Dynamical Systems- Series A, Volume 38, Number 9, September 2018, pp. 4555-4570.
\bibitem[R3]{R3} M.K. Roychowdhury, \emph{Optimal quantization for the Cantor distribution generated by infinite similitudes}, Israel Journal of Mathematics 231 (2019), 437-466.

%\bibitem [R4]{R4} M.K. Roychowdhury, \emph{The quantization of the standard triadic Cantor distribution}, Houston Journal of Mathematics, Volume 46, Number 2, 2020, Pages 389-407.

\bibitem[Z1]{Z1} P.L. Zador, \emph{Asymptotic Quantization Error of Continuous Signals and the Quantization Dimension}, IEEE Transactions on Information Theory, 28 (1982), 139-149.

\bibitem[Z2]{Z2} R. Zam, \emph{Lattice Coding for Signals and Networks: A Structured Coding Approach to Quantization, Modulation, and Multiuser Information Theory}, Cambridge University Press, 2014.

\end{thebibliography}
\end{document}